\def\h{ {\cal H} }
\def\a{ {\cal A} }
\def\l{ {\cal L} }
\def\n{ {\cal N} }
\def\b{ {\cal B} }
\def\u{ {\cal U} }
\def\m{ {\cal M} }
\def\ii{ {\cal I} }
\def\s{ {\cal S} }
\def\p{ {\cal P} }
\def\f{ {\cal F} }
\def\d{ {\cal D} }
\def\j{ {\cal J} }
\def\hh{ \mathbb{H}_{(P_0,Q_0)}}
\newtheorem{teo}{Theorem}[section]
\newtheorem{prop}[teo]{Proposition}
\newtheorem{lem}[teo]{Lemma}
\newtheorem{coro}[teo]{Corollary}
\newtheorem{defi}[teo]{Definition}
\theoremstyle{definition}
\newtheorem{rem}[teo]{Remark}
\newtheorem{ejem}[teo]{Example}
\title{Projections with fixed difference: a Hopf-Rinow theorem.}
\author{Esteban Andruchow, Gustavo Corach, L\'azaro Recht}
\begin{document}

\maketitle 

\begin{abstract}
The set $\d_{A_0}$, of pairs of orthogonal  projections $(P,Q)$ in generic position with fixed difference $P-Q=A_0$, is shown to be a homogeneus smooth manifold: it is the quotient of the unitary group of the commutant $\{A_0\}'$ divided by the unitary subgroup of the  commutant $\{P_0, Q_0\}'$, where $(P_0,Q_0)$ is any fixed pair in $\d_{A_0}$. Endowed  with a natural reductive structure (a linear connection) and the quotient Finsler metric of the operator norm, it behaves as a classic Riemannian space: any two pairs in $\d_{A_0}$  are joined by a  geodesic of minimal length. Given a base pair $(P_0,Q_0)$,  pairs in an open dense subset of $\d_{A_0}$ can be joined to $(P_0,Q_0)$ by a {\it unique} minimal geodesic. 
\end{abstract}

\bigskip

{\bf 2010 MSC:}  58B20, 47B15.

{\bf Keywords:}  Projections, pairs of projections, differences of projections.

\section{Introduction}
Let $\h$ be a Hilbert space, denote by $\b(\h)$ the algebra of bounded linear operators in $\h$, and by  $\p(\h)$ the set of (orthogonal) projections in $\h$.  We study here the class $\d$  of operators which are differences of projections,
$$
\d=\{ P-Q: P,Q\in\p(\h)\},
$$
and for  $A\in\d$,  the set
$$
\d_A=\{ (P,Q)\in\p(\h)\times\p(\h): P-Q=A\}.
$$
Results on differences $P-Q$ appeared since the 1940's, as part of the {\it two subspaces problem}: to find a complete set of unitary invariants for a pair of closed subspaces $M,N$ (or equivalently, for a pair of projections $P,Q$). This problem was solved by J. Dixmier \cite{dixmier}, who obtained a characterization of $\d$. An operator $A\in\b(\h)$ belongs to $\d$ if and only if $A^*=A$, $\|A\|\le 1$  and there exists a symmetry $V$ in $\h'=N(A^2-1)^\perp$ such that $AV=VA$ in $\h'$ (a {\it symmetry} is a selfadjoint unitary operator). This form of Dixmier's result is due to Ch. Davis \cite{davis}, who found a nice  solution of  the two subspaces problem by a geometric study of the closeness and separation  operators of a pair $P,Q$: $C(P,Q)=PQP+(1-P)(1-Q)(1-P)$ is called the closeness operator of $P, Q$, and $S(P,Q)=P(1-Q)P+(1-P)Q(1-P)$ is the separation operator of $P, Q$. Observe that, if $A=P-Q$, then $C=1-A^2$ and $S=A^2$.
In \cite{ass} J. Avron, R. Seiler and B. Simon defined and studied Fredholm pairs of projections, and an index for them: $(P,Q)$ is a Fredholm pair if $P|_{R(Q)}:R(Q)\to R(P)$ is a Fredholm operator, whose index is called the index of the pair. Their methods rely on an extensive use  of the differences $A=P-Q$ and $B=P+Q-1$. For a nice presentation of these results, see W. Amrein and K. Sinha \cite{amreinsinha} 

A more recent study of $\d$ can be found in \cite{pemenoscu},  where several known facts on the differential geometry of $\p(\h)$ were used to describe, for instance, the interior and boundary of $\d$, its connected components, and also some special parts of $\d$ (elements in $\d$ which are Fredholm, compact,  or nuclear). 

In \cite{chinos}, W. Shi., G. Ji and H. Du studied several properties of $\d_A$, for any $A\in\d$. In particular, they proved that $\d_{A_0}\subset\b(\h_0)$ is connected, where $\h_0=\{N(A^2-1)\oplus N(A)\}^\perp$ and $A_0=A|_{\h_0}$. 

The main goal of this paper is to present $\d_{A_0}$ as a homogeneous space and a differentiable manifold. 
As such, following ideas of Dur\'an, Mata-Lorenzo and Recht \cite{coco}, the space $\d_{A_0}$  has a natural invariant Finsler metric. Also, using a well known characterization by Halmos  \cite{halmos}, of pairs of projections in generic position, we show that $\d_{A_0}$ has a reductive structure, a fact which enables one to introduce a linear connection in this space, and to compute its geodesics (given by one-parameter unitary groups acting on a given pair $(P_0, Q_0)$). We show that with the Finsler metric and the reductive structure,  $\d_{A_0}$ satisfies a Hopf-Rinow theorem: pairs in $\d_{A_0}$  are joined by a  geodesic of minimal length. Moreover, on a dense open subset of $\d_{A_0}$, such geodesic is unique.

In Section 2, we present Davis' characterization of $\d$ by means of the {\it Halmos decomposition} of $\h$ (in the presence of a pair $P,Q\in\p(\h)$). Using Davis' and Halmos' tools, we show that the Friedrich's angle is constant in $\d_A$. Recall (see Deutsch \cite{deutsch}) that $\alpha_F(M,N)\in [0,\pi/2]$ is the Friedrich's angle between the closed subspaces $M,N$ if
$$
\cos(\alpha_F(M,N))=\sup\{ |\langle \mu,\nu\rangle| : \mu\in M\ominus N, \nu\in N\ominus M, \|\mu\|=\|\nu\|=1\}=\|P_MP_N-P_{M\cap N}\|.
$$ 
 Moreover, it is shown that $\cos(\alpha_F(M,N))=\|P_0Q_0\|$ (=constant) for any $P=P_M$, $Q=P_N$ such that $P-Q=A$, where $P_0$, $Q_0$ denote the reductions of $P$, $Q$ to the common invariant subspace $\h_0=\{N(A)\oplus N(A^2-1)\}^\perp$.  Hereafter, $P_0, Q_0, A_0$ will be called the {\it generic part} of $P,Q,A$, respectively. Also in this section we show that, with the usual order of positive definite operators, the set $\{P_0+Q_0: (P_0,Q_0)\in\d_{A_0}\}$ cannot be ordered: $P_0+Q_p\le P'_0+Q'_0$ if and only if $P_0=P'_0$ and $Q_0=Q'_0$.
In Section 3 we introduce the action of the unitary group $\u_\a$ of 
$$
\a=\{A\}'=\{T\in\b(\h): TA=AT\}
$$
on $\d_A$. $\a$ and $\u_\a$ are also reduced by $\h_0$.  It is proven that the generic part of $\u_\a$ acts transitively on $\d_{A_0}$, and from this follows that $\d_{A_0}$ is connected (as proved by Shi, Ji and Du in \cite{chinos}). Section 4 contains a description of $\a_0$ in terms of Halmos' decomposition, which will be used later. In Section 5 we present some examples in $\d$. In Section 6 we show that $\d_{A_0}$ is a differentiable homogeneous manifold, with a natural reductive structure. For instance, the geodesic curves can be computed, and we show that the exponential map of the linear connection is surjective. We endow the tangent spaces of $\d_{A_0}$ with the quotient norm, as defined by Dur\'an, Mata-Lorenzo and Recht in \cite{coco}, and show that with this metric, the geodesics of the reducive connection are minimal up to the border of $\d_{A_0}$.

\section{Davis' characterization}

If $T\in\b(\h)$, denote by $R(T)$ and $N(T)$  the range and nullspace of $T$, respectively. If $A\in\d$, the space $\h$ can be decomposed orthogonally as
\begin{equation}\label{3space}
\h=N(A)\oplus N(A^2-1)\oplus \h_0,
\end{equation}
where $\h_0=\left(N(A)\oplus N(A^2-1)\right)^\perp$.
Note that $N(A^2-1)=N(A-1)\oplus N(A+1)$. 
For any presentation $A=P-Q$, it is straightforward to verify that
$$
N(A)=R(P)\cap R(Q)\oplus N(P)\cap N(Q) \ , \ \ N(A-1)=R(P)\cap N(Q) \hbox{ and } N(A+1)=N(P)\cap R(Q).
$$
So that the decomposition (\ref{3space}) is essentially the decomposition considered by Dixmier \cite{dixmier} and Halmos \cite{halmos} to study the equivalence of pair of projections. In particular, the subspace $\h_0$ is usually called the {\it generic part} of $P$ and  $Q$, or more properly, the generic part of $A=P-Q$. Therefore, the decomposition (\ref{3space}) reduces simultaneously  any pair $P$, $Q$ in $\d_A$.

Using the decomposition (\ref{3space}), the set $\d_A$ is factorized as follows:
\begin{enumerate}
\item
In the subspace $N(A^2-1)=N(A-1)\oplus N(A+1)$, $A$ is given by
$A=1_{N(A-1)}\oplus -1_{N(A+1)}$. That is, any pair $(P,Q)\in\d_A$ coincides with $(P_{N(A-1)},P_{N(A+1)})$  in this subspace. 
\item
In the subspace $N(A)$, the pairs $(P,Q)\in\d_A$ reduce to pairs of the form $(P',P')$, with $P'\in\p(N(A))$. Thus, if $N(A)$ is non trivial,  the structure of $\d_A|_{N(A)}$ is that of $\p(N(A))$.
\item
The structure of $\d_A$ in $\h_0$ was characterized by Davis \cite{davis}.  In Theorem \ref{davis} below we describe the results obtained by Davis \cite{davis} on this set.
\end{enumerate}

A {\it symmetry} $V\in\b(\h)$ is a selfadjoint unitary operator: $V^*=V^{-1}=V$. Symmetries are special cases of difference of projections: $V=P_{+1}-P_{-1}$, where $P_{\pm 1}$ are the orthogonal projections onto the eigenspaces $\{\xi\in\h: V\xi=\pm\xi\}$. Also note that $V=2P_{+1}-1$ and $P_{\pm 1}=\frac12(1\pm V )$.

Let us summarize the information above:

\begin{rem}
In the decomposition $\h=N(A)\oplus N(A^2-1)\oplus \h_0$, the set $\d_A$ is decomposed as
$$
\d_A=\p_{N(A)}\oplus \{A_{\pm 1} \} \oplus \d_{A_0},
$$
where $A_{\pm 1}=P_{N(A-1)}-P_{N(A+1)}$ is a symmetry. It follows that $\d_A$ consists of a single element if and only if $A$ is a symmetry.
\end{rem}
As announced, let us describe the structure of $\d_{A_0}$:
\begin{teo}\label{davis} {\rm (essentially \cite{davis})}
Let $A\in\d$, and let $A_0$ be its generic part. There exist one to one correspondences between
\begin{itemize}
\item
Pairs $(P_0,Q_0)$ such that $P_0-Q_0=A_0$.
\item
Symmetries $V$ in $\h_0$ such that $VA_0=-A_0V$.
\item
Closed subspaces $\s$ of $\h_0$ such that $A_0(\s)\subset \s^\perp$ and $A_0(\s^\perp)\subset \s$.
\item
Projections $E\in\p(\h_0)$ such that $EA_0E=(1-E)A_0(1-E)=0$.
\end{itemize}
\end{teo}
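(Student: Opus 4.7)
The plan is to exhibit the four sets as bijective via a chain of correspondences linked through the symmetry $V$: the legs $V \leftrightarrow E \leftrightarrow \s$ are essentially algebraic translations, while the leg $V \leftrightarrow (P_0,Q_0)$ carries the analytic content of the theorem.

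First I would treat the two algebraic legs. Setting $V = 2E - 1$ (equivalently $E = (1+V)/2$), the anticommutation $VA_0 + A_0 V = 0$ becomes the identity $EA_0 + A_0 E = A_0$, which upon multiplying on the left by $E$ and by $1-E$ is equivalent to $EA_0 E = (1-E)A_0(1-E) = 0$. For the correspondence $E \leftrightarrow \s$, take $\s = R(E)$: the condition $A_0(\s) \subset \s^\perp$ translates to $(1-E)A_0 E = A_0 E$, hence $EA_0 E = 0$; and analogously $A_0(\s^\perp) \subset \s$ is equivalent to $(1-E)A_0(1-E) = 0$.

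The core step is the bijection $(P_0,Q_0) \leftrightarrow V$. Given a pair, I would introduce the auxiliary operator $B_0 = P_0 + Q_0 - 1$, for which a direct calculation yields
\[
B_0^2 = 1 - A_0^2, \qquad B_0 A_0 = -A_0 B_0.
\]
Since $(P_0, Q_0)$ lives in the generic part, $N(A_0) = N(A_0^2 - 1) = 0$, so $1 - A_0^2$ is injective; hence $B_0$ is an injective selfadjoint operator, and its spectral sign $V := \mathrm{sign}(B_0)$ is a well-defined symmetry of $\h_0$, with polar decomposition $B_0 = V|B_0|$ and $|B_0| = (1-A_0^2)^{1/2}$. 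Because $|B_0|$ is a continuous function of $A_0^2$, it commutes with $A_0$, and then $VA_0|B_0| = -A_0V|B_0|$ combined with the dense range of $|B_0|$ forces $VA_0 = -A_0 V$. Conversely, given such a $V$, put $W = (1-A_0^2)^{1/2}$ and define
\[
P_0 = \frac{1}{2}\bigl(1 + A_0 + VW\bigr), \qquad Q_0 = \frac{1}{2}\bigl(1 - A_0 + VW\bigr).
\]
Using $VW = WV$ (because $V$ commutes with $A_0^2$) and $VWA_0 = -A_0 VW$, together with $V^2 = 1$ and $W^2 = 1 - A_0^2$, one checks $P_0^* = P_0$, $P_0^2 = P_0$ (and analogously for $Q_0$), while $P_0 - Q_0 = A_0$ is immediate; since $P_0 + Q_0 - 1 = VW$ is then the polar decomposition of $B_0$, the two constructions are mutually inverse.

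The main obstacle is this last correspondence: it is where the specific structure of the generic part is decisive, being needed to ensure that $B_0$ is injective and hence $V$ is a genuine symmetry rather than only a partial isometry, and it requires careful functional-calculus bookkeeping to transfer the anticommutation with $A_0$ from $B_0$ to its sign $V$ and to verify the projection identities for the reconstructed $P_0, Q_0$.
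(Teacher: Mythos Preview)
Your proposal is correct and follows essentially the same route as the paper: both obtain $V$ as the sign of $P_0+Q_0-1$ (whose injectivity on the generic part is the key point), reconstruct the pair from $V$ via $P_0=\tfrac12(1+A_0+(1-A_0^2)^{1/2}V)$ and $Q_0=\tfrac12(1-A_0+(1-A_0^2)^{1/2}V)$, and handle the remaining legs through $V=2E-1$ and $\s=R(E)$. The only cosmetic difference is that the paper deduces $VA_0=-A_0V$ from the intertwining relations $VP_0=Q_0V$, $VQ_0=P_0V$, whereas you go directly through the identity $B_0A_0=-A_0B_0$ and the dense range of $|B_0|$; these are equivalent.
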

\begin{proof}
Given a pair $(P_0,Q_0)\in\d_{A_0}$, one obtains a symmetry which anti-commutes with $A_0$ as follows. Consider the selfadjoint operator $S=P_0+Q_0-1$. Note that $S=P_0-(1-Q_0)$ is also a difference of projections. Its nullspace is trivial:
$$
N(S)=R(P_0)\cap R(1-Q_0) \oplus N(P_0)\cap N(1-Q_0)=R(P_0)\cap N(Q_0)\oplus N(P_0)\cap R(Q_0)=\{0\}.
$$
Therefore, the polar decomposition of $S$, $S=V |S|=|S|V$ yields a symmetry $V$ (note that $V$ is the sign function of $S$). Clearly, $SP_0=P_0Q_0=Q_0S$ and $SQ_0=Q_0P_0=P_0S$. In particular, this implies that $S^2$ commutes  with $P_0$ and $Q_0$. Then $|S|=(S^2)^{1/2}$ also commutes with both projections. It follows that 
$V P_0=Q_0V$ and $V Q_0 =P_0V$. Then
$$
VA_0=VP_0-VQ_0=Q_0V-P_0V=-A_0V.
$$

Given a symmetry $V$ which anti-commutes with $A_0$, put  (see \cite{davis}, p. {\bf 181})
$$
P_V=\frac12\{ 1+A_0+(1-A_0^2)^{1/2}V\} \ \hbox{ and } \ \ Q_V=\frac12\{ 1-A_0+(1-A_0^2)^{1/2}V\}.
$$
Straightforward computations show that $P_V, Q_V\in\p(\h_0)$,  $P_V-Q_V=A_0$, and $P_V+Q_V-1=(1-A_0^2)^{1/2}V$. Then, since $V$ and $A_0^2$ commute,
$$
 ( P_V+Q_V-1)^2=1-A_0^2 ,  \hbox{ i.e., } |P_V+Q_V-1|=(1-A_0^2)^{1/2},
$$  
and $P_V+Q_V-1=|P_V+Q_V-1|V$. That is, the correspondence betwen pairs and symmetries is reciprocal. 

Given a symmetry $V$ which anti-commutes with $A_0$, let $\s=\{\xi\in\h_0: V\xi=\xi\}$, so that $\s^\perp=\{\xi\in\h_0: V\xi=-\xi\}$. If $\xi\in\s$, $VA_0\xi=-A_0V\xi=-A_0\xi$, i.e., $A\xi\in\s^\perp$. Similarly, $A(\s^\perp)\subset \s$. The converse holds: if $A_0(\s)\subset S^\perp$ and $A(\s^\perp)\subset \s$, then the symmetry $V=P_\s-P{\s^\perp}=2P_\s-1$ anti-commutes with $A_0$. In fact,
if $\xi\in\s$, 
$$
(2P_\s-1)A_0\xi=2P_\s A_0\xi-A_0\xi=-A_0\xi=-A_0(2P_\s-1)\xi;
$$
if $\eta\in\s^\perp$,
$$
(2P_\s-1)A_0\eta=2P_\s A_0\eta-A_0\eta=2A_0\eta-A_0\eta =A_0\eta=-A_0(2P_\s-1)\eta.
$$

Given a closed subspace $\s\subset \h_0$ such that $A_0(\s)\subset\s^\perp$ and $A_0(\s^\perp)\subset\s$, the orthogonal projection $E=P_\s$ satisfies that $EA_0E=(1-E)A_0(1-E)=0$, and conversely. 
\end{proof}
\begin{rem} \label{anexo davis}
Since $A_0$ is selfadjoint with trivial nullspace, the isometric part $J_0$, in the polar decomposition $A_0=J_0|A_0^2|=|A_0|J_0$, is a symmetry. Note that a symmetry $V$ anti-commutes with $A_0$, if and only if it anti-commutes with $J_0$. Indeed, $V$ commutes with $A_0^2$ and with $|A_0|$, which has also trivial nullspace:
$$
|A_0|J_0V=-V|A_0|J_0=-|A_0|VJ_0,
$$
which implies that $J_0V=-VJ_0$. The converse is trivial. Therefore, the following equivalent condition could be added in the previous theorem:
\begin{itemize}
\item
Symmetries $V$ in $\h_0$ such that $VJ_0=-J_0V$.
\end{itemize}
\end{rem}

\begin{rem}
Note that $A\in\d$ is a selfadjoint contraction. Moreover, the existence of a symmetry intertwining $A_0$ with $-A_0$, means that the spectrum of the whole $A$ is symmetric with respect to the origin, except for an eventual asymmetry at $\lambda=\pm 1$. For instance, if $0<\lambda<1$ is an eigenvalue of $A$, then $\pm\lambda\in\sigma(A_0)$,  with the same multiplicity. The symmetry may break at $\lambda=1$.
\end{rem}

\begin{rem}\label{no hay par menor}
Consider now the following question: among the pairs $(P,Q)\in\d_A$, does the exist an optimal element which minimizes $P+Q$?
We use the decomposition $\h=N(A)\oplus N(A^2-1)\oplus \h_0$ which reduces all  pairs in $\d_A$. In the first subspace $N(A)$, all pairs are of the form $(E,E)$, for $E$ a projection onto a subspace of $N(A)$. Clearly, there is a minimal pair here, taking $E=0$. On $N(A^2-1)$, there is one pair, and for this pair $P+Q$ equals the identity of $N(A^2-1)$. Let us prove that pairs $(P_0,Q_0)$  in the generic part $\h_0$ are not comparable (unless they are equal). This implies that in the nonntrivial case, where the generic part $\h_0\ne 0$, there are no possible minimizers for $P+Q$. For $(P_0, Q_0)\in\d_{A_0}$, let $V_0$ be the corresponding Davis symmetry: $P_{V_0}=P_0$, $Q_{V_0}=Q_0$. Then
$$
P_0+Q_0=1+V_0(1-A_0^2)^{1/2}.
$$
Thus, comparison of these sums is equivalent to comparison  of the operators $V_0(1-A_0^2)^{1/2}=(1-A_0^2)^{1/2}V_0$. If $V_1$ is the symmetry corresponding to another pair $(P_1,Q_1)$, then, since
$$
\langle V_0(1-A_0^2)^{1/2}\xi,\xi\rangle=\langle V_0(1-A_0^2)^{1/4}\xi, (1-A_0^2)^{1/4}\xi\rangle,
$$
it follows that $P_1+Q_1\le P_0+Q_0$ if and only if
$$
\langle V_0(1-A_0^2)^{1/4}\xi, (1-A_0^2)^{1/4}\xi\rangle \le \langle V_1(1-A_0^2)^{1/4}\xi, (1-A_0^2)^{1/4}\xi\rangle .
$$
Moreover, since $1-A_0^2$ has trivial nullspace, $(1-A_0^2)^{1/4}$ has dense range. Thus, the inequality above is equivalent to 
$V_1\le V_0$.  This inequality is equivalent, in turn, to the inclusion $\s_1^+\subset \s_0^+$, where $S_i^+=\{\xi\in\h_0: V_i\xi=\xi\}$. Therefore our assumption $P_1+Q_1< P_0+Q_0$ implies the existence of a nontrivial vector $\xi_0\in\s_0^+$ such that $\xi_0\perp\s_1^+$. This leads us to a contradiction. In fact, note  that $A_0V_0=-V_0A_0$ implies that $A_0(\s_i^+)\subset (\s_i^+)^\perp$ and $A_0((\s_i^+)^\perp)\subset \s_i^+$. Then 
$$
A_0\xi_0\in(\s_0^+)^\perp \ \hbox{ and } \ \ A_0\xi_0\in A_0((\s_1^+)^\perp)\in \s_0^+,
$$
i.e., $A_0\xi_0=0$, a contradiction, since $A_0$ has trivial nullspace. 
\end{rem}

\subsection{Halmos decomposition}
Given two projections $P,Q$, Halmos proved in \cite{halmos} that there exists an isometric isomorphism between the generic part $\h_0$ (of $P$ and $Q$) and a product space $\l\times \l$ which carries $P_0$ and $Q_0$ to the operator matrices
$$
\left( \begin{array}{cc} 1 & 0 \\ 0 & 0 \end{array}\right) \ \hbox{ and } \ \   \left( \begin{array}{cc} C^2 & CS \\ CS & S^2 \end{array}\right),
$$
respectively. Here $C=\cos(\Gamma)$ and $S=\sin(\Gamma)$, where $0\le\Gamma\le \pi/2$ is a positive operator  in $\l$ with trivial nullspace. In particular, $CS=SC$ and $S$ has trivial nullspace. Note that also $C$ has trivial nullspace (i.e., $\pi/2$ is not an eigenvalue of $X$). Indeed, if $C\xi=0$, then 
$$
\left( \begin{array}{cc} 1 & 0 \\ 0 & 0 \end{array}\right)\left(\begin{array}{c} \xi \\ 0 \end{array} \right)=\left(\begin{array}{c} \xi \\ 0 \end{array} \right) \  \hbox{ and } \ \left( \begin{array}{cc} C^2 & CS \\ CS & S^2 \end{array}\right)\left(\begin{array}{c} \xi \\ 0 \end{array} \right)=\left(\begin{array}{c} 0 \\ 0 \end{array} \right),
$$
i.e., $\left(\begin{array}{c} \xi \\ 0 \end{array} \right)$ lies in $R(P_0)\cap N(Q_0)$, which is trivial.

\subsection{Friedrichs' angle}

Given two closed subspaces $\m,\n\subset \h$, the Friedrichs angle  \cite{friedrichs} between $\m$ and $\n$ is the angle whose cosine is  
$$
c(\m,\n):=\sup\{|\langle\mu,\nu\rangle|: \mu \in \m\ominus\n, \nu \in \n\ominus\m,  \|\mu\|=\|\nu\|=1\}. 
$$
It holds that $c(\m,\n)=\|P_\m P_\n - P_{\m\cap\n})\|$ (see \cite{deutsch}).

We prove next that if $(P,Q)\in\d_A$, then $c(R(P),R(Q))$ does not depend on the pair, i.e., it  is an invariant of $A$.
\begin{prop}
Friedrichs' angle $c(R(P),R(Q))=c(N(P),N(Q))$ is constant for all $(P,Q)$ in $\d_A$.
\end{prop}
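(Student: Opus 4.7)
The plan is to reduce the problem to the generic part of $A$, express $\|P_0 Q_0\|$ in terms of $A_0$ alone via the identity $P_0 Q_0 P_0 = (1-A_0^2) P_0$, and then dualize for the nullspaces. Concretely, I would use the common reducing decomposition $\h=N(A)\oplus N(A-1)\oplus N(A+1)\oplus \h_0$: on $N(A)$ the two projections coincide with a single projection $E$, so $PQ=E$ agrees there with the restriction of $P_{R(P)\cap R(Q)}$ and the difference vanishes; on $N(A\mp 1)$ one of $P,Q$ is the identity and the other is zero, so $PQ=0$ and $R(P)\cap R(Q)$ meets these blocks only in $\{0\}$; on $\h_0$ the pair is in generic position, so $R(P_0)\cap R(Q_0)=\{0\}$. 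Combining these,
\[
\|PQ-P_{R(P)\cap R(Q)}\|=\|P_0 Q_0\|,
\]
and the task reduces to showing that $\|P_0 Q_0\|$ depends only on $A_0$.

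For that, I would use Theorem~\ref{davis}: every pair in $\d_{A_0}$ arises from a symmetry $V$ anti-commuting with $A_0$, so $V$, $P_0$ and $Q_0$ all commute with $A_0^2$. Expanding $A_0^2=(P_0-Q_0)^2$ and multiplying on both sides by $P_0$ yields the identity $P_0 Q_0 P_0=(1-A_0^2)P_0$, hence $\|P_0 Q_0\|^2=\|(1-A_0^2)P_0\|$, the norm of $1-A_0^2$ restricted to $R(P_0)$. Since $1-A_0^2$ commutes with $P_0$ one has $\|1-A_0^2\|=\max\{\|(1-A_0^2)P_0\|,\|(1-A_0^2)(1-P_0)\|\}$. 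To identify these two maxima I would exhibit a unitary on $\h_0$ commuting with $A_0^2$ and sending $P_0$ to $1-P_0$; the natural candidate is $U=VJ_0$, where $J_0$ is the isometric part of $A_0$ from Remark~\ref{anexo davis}. Both factors commute with $A_0^2$, and a short computation using $J_0V=-VJ_0$ gives $J_0 P_0 J_0=\frac{1}{2}(1+A_0-(1-A_0^2)^{1/2}V)=1-Q_0$, whence $UP_0U^{-1}=V(1-Q_0)V=1-P_0$. This forces the two restrictions of $1-A_0^2$ to have the same norm, and therefore $\|P_0 Q_0\|=\|1-A_0^2\|^{1/2}$, an invariant of $A$.

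For the equality $c(N(P),N(Q))=c(R(P),R(Q))$, I would apply the same argument to the pair $(1-P,1-Q)\in\d_{-A}$; since $(-A_0)^2=A_0^2$, both Friedrichs cosines evaluate to $\|1-A_0^2\|^{1/2}$. The main obstacle is the equalization of the norms of $1-A_0^2$ on $R(P_0)$ and on $N(P_0)$: the algebraic identity $P_0 Q_0 P_0=(1-A_0^2)P_0$ alone is insufficient, since $R(P_0)$ itself varies with the pair; the covariant symmetry $U=VJ_0$ is what absorbs this dependence.
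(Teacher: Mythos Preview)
Your argument is correct and follows a genuinely different path from the paper's. The paper reduces to the generic part in the same way you do, but then invokes the transitivity of the action of $\u_{\a_0}$ on $\d_{A_0}$ (Theorem~\ref{transitiva}, a forward reference) to obtain a unitary $U$ with $UP_0U^*=P_0'$ and $UQ_0U^*=Q_0'$, and concludes that $\|P_0Q_0\|=\|P_0'Q_0'\|$ by unitary invariance of the norm. Your route is more self-contained: using only Davis' formula $P_0=\tfrac12(1+A_0+(1-A_0^2)^{1/2}V)$ and the sign symmetry $J_0$ from Remark~\ref{anexo davis}, you produce the explicit identity $\|P_0Q_0\|=\|1-A_0^2\|^{1/2}$, which the paper recovers only in the subsequent remark via the Halmos model. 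The key step---conjugating $P_0$ to $1-P_0$ by $VJ_0$ while fixing $A_0^2$---is a nice replacement for the transitivity theorem, and it also makes the equality $c(N(P),N(Q))=c(R(P),R(Q))$ immediate via $(1-P,1-Q)\in\d_{-A}$, a point the paper states but does not spell out.
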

\begin{proof}
Pick $(P',Q'), (P,Q)\in\d_A$. Let us reduce $P'Q'-P_{R(P')\cap R(Q')}$ and $PQ-P_{R(P)\cap R(Q)}$ in the three space decomposition (\ref{3space}). Note that $R(P)\cap R(Q)$ and $R(P')\cap R(Q')$  are non trivial only in $N(A)$. In $N(A)$ and $N(A^2-1)=R(P)\cap N(Q)\oplus N(P)\cap R(Q)$, $PQ-P_{R(P)\cap R(Q)}$ is trivial, and similarly for $(P',Q')$.
In the generic part $\h_0$, by Theorem \ref{transitiva}, there exists a unitary operator $U$ such that $UP_0U^*=P'_0$ and $UQ_0U^*=Q'_0$. Then  $U(R(P_0))=R(P'_0)$ and $U(R(Q_0))=R(Q'_0)$, so that 
$$
U(R(P_0)\cap R(Q_0))=R(P'_0)\cap R(Q'_0) \  , \ \hbox{  i.e.,  } UP_{R(P_0)\cap R(Q_0)}U^*=P_{R(P'_0)\cap R(Q'_0)}.
$$
Then,  in the three space decomposition (\ref{3space})
$$
P'Q'-P_{R(P')\cap R(Q')}=0 \oplus 0 \oplus U\left(P_0Q_0-P_{R(P_0)\cap R(Q_0)}\right)U^*,
$$
and, thus,
$$
\|P'Q'-P_{R(P')\cap R(Q')}\|=\|P_0Q_0-P_{R(P_0)\cap R(Q_0)}\|=\|PQ-P_{R(P)\cap R(Q)}\|,
$$
i.e., $c(R(P'),R(Q'))=c(R(P),R(Q))$.
\end{proof}
\begin{rem}
Note that  $c(R(P),R(Q))=\|P_0Q_0\|$. Using Halmos representation
$$
\|P_0Q_0\|=\|P_0Q_0P_0\|^{1/2}=\|\left(\begin{array}{cc} C^2 & 0 \\ 0 & 0 \end{array} \right)\|^{1/2}=\|C\|.
$$
Then, the angle equals $\cos^{-1}(\|\cos(\Gamma)\|)$. If $\Gamma$ is non invertible, $0\in\sigma(\Gamma)$ and therefore 
$1\in\sigma(C)$, and thus the angle is $\pi/2$. If $\Gamma$ is invertible (which is equivalent to $A^2-1$ being of closed range), then $\|\cos(\Gamma)\|=\cos(\|\Gamma^{-1}\|^{-1})$, and the angle is $\|\Gamma^{-1}\|^{-1}$, or, equivalently, the lowest value in the spectrum of $\Gamma$. In any case, this quantity is an invariant of $A$. We shall see below (Remark \ref{norma suma}), that $\|P_0 + Q_0\|$ is also an invariant of $A_0$.
\end{rem}

\section{A unitary action on $\d_A$}

Let $\a:=\{A\}'=\{T\in\b(\h): TA=AT\}$. Since $A$ is selfadjoint, $\a$ is a von Neumann subalgebra of $\b(\h)$.
Let $\u_\a$ be the unitary group of $\a$. Observe that $\u_\a$ is connected.  The group $\u_\a$ acts on $\d_A$:
$$
U\cdot (P,Q)=(UPU^*,UQU^*) \ , \ U\in\u_\a \ , \ (P,Q)\in\d_A,
$$
because $UPU^*-UQU^*=UAU^*=A$.

The algebra $\a$ and the action of $\u_\a$ can be factored using the three space decomposition (\ref{3space}) $\h=N(A)\oplus N(A^2-1)\oplus \h_0$. The algebra $\a$ splits as
$$
\a=\b(N(A)) \oplus \a_1 \oplus \a_0.
$$
Let us describe the summands. The first summand is clearly $\b(N(A))$: any operator acting on $N(A)$ (and trivial in $N(A)^\perp$) commutes with $A$ ($=0$ in $N(A)$).
A pair $(P,Q)\in\d_A$ reduces to $(P',P')$ in $N(A)$ , for some projection $P' \in  \b(N(A))$. The action of the unitary group of $N(A)$ on these pairs is essentially the action of the unitary group of a space on the projections of the space. The orbits are parametrized by the dimensions of the range and the nullspace.

The second summand consists of the algebra of operators which commute with $A|_{N(A^2-1)}$. In the decomposition $N(A^2-1)=N(A-1)\oplus N(A+1)$, $A|_{N(A^2-1)}$ is the matrix
$$
A|_{N(A^2-1)}=\left( \begin{array}{cc} 1 & 0 \\ 0 & -1 \end{array} \right)
$$
and  the operators in $N(A^2-1)$ which commute with $A$ are of the form
$$
T=\left( \begin{array}{cc} T_1 & 0 \\ 0 & T_2 \end{array} \right).
$$
The unitary operators of this form leave $A|_{N(A^2-1)}$ fixed (a fact consistent with the observation that all pairs $(P,Q)\in\d_A$ reduce to a unique element in $N(A^2-1)$).

The third summand is $\a_0:=\{A|_{\h_0}: A\in\a\}$.
Therefore, it is natural to focus on the action of $\u_{\a_0}$, the unitary group of the part $\a_0$.

\begin{teo} \label{transitiva}
The action of $\u_{\a_0}$ on $\d_{A_0}$ is transitive.
\end{teo}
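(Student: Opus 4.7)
By Theorem~\ref{davis} there is a bijection $V\leftrightarrow(P_V,Q_V)$ between self-adjoint unitaries $V\in\b(\h_0)$ with $VA_0=-A_0V$ (the Davis symmetries) and the elements of $\d_{A_0}$, given explicitly by $P_V,Q_V=\frac12(1\pm A_0+(1-A_0^2)^{1/2}V)$. Since any $U\in\u_{\a_0}$ commutes with $A_0$, hence with $(1-A_0^2)^{1/2}$, conjugation commutes with this bijection: $UP_VU^*=P_{UVU^*}$ and $UQ_VU^*=Q_{UVU^*}$. So it suffices to show that any two Davis symmetries $V_0,V_1$ are conjugate by some $U\in\u_{\a_0}$.

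The plan is to use the eigenspace decomposition of $J_0$, the isometric part of $A_0$: write $\h_0=\h_0^+\oplus\h_0^-$ for its $\pm 1$ eigenspaces. Since $|A_0|$ commutes with $J_0$ (both are functions of $A_0$), it is block diagonal, $|A_0|=M_+\oplus M_-$, and therefore $A_0=M_+\oplus(-M_-)$. If $V$ is a Davis symmetry, then $VA_0^2=A_0^2V$ yields $V|A_0|=|A_0|V$, and the computation in Remark~\ref{anexo davis} then produces $VJ_0=-J_0V$. Written in the decomposition $\h_0^+\oplus\h_0^-$, anti-commutation with $J_0$ forces $V=\left(\begin{array}{cc}0 & b\\ b^* & 0\end{array}\right)$ with $b:\h_0^-\to\h_0^+$ a unitary, and the identity $V|A_0|=|A_0|V$ becomes the intertwining $M_+b=bM_-$.

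Given two Davis symmetries $V_0,V_1$ with associated unitaries $b_0,b_1$, I would set $u:=b_1b_0^*\in\b(\h_0^+)$. Then $u$ is unitary, and the intertwinings $M_+b_i=b_iM_-$ give $uM_+=b_1M_-b_0^*=M_+b_1b_0^*=M_+u$; hence the block operator $U:=\left(\begin{array}{cc}u & 0\\ 0 & I\end{array}\right)$ (with $I=I_{\h_0^-}$) is unitary and commutes with $A_0=M_+\oplus(-M_-)$, i.e.\ $U\in\u_{\a_0}$. A one-line block computation then yields $UV_0U^*=\left(\begin{array}{cc}0 & ub_0\\ b_0^*u^* & 0\end{array}\right)=\left(\begin{array}{cc}0 & b_1\\ b_1^* & 0\end{array}\right)=V_1$, as required.

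The step that takes the most care is the block analysis in the second paragraph: from $VA_0=-A_0V$ one must extract both the off-diagonal structure of $V$ (from anti-commutation with $J_0$) and the intertwining $M_+b=bM_-$ (from commutation with $|A_0|$). Once these are in place, the construction of $U$ is mechanical, so the substantive content of the argument is really the explicit block-form description of Davis symmetries supplied by the $J_0$-eigenspace decomposition.
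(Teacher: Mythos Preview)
Your proof is correct and takes a genuinely different route from the paper's. The paper works directly with two given pairs $(P_0,Q_0),(P'_0,Q'_0)$ and splits $\h_0$ along $N(P_0+Q'_0-1)\oplus N(P_0+Q'_0-1)^\perp$; on the second summand the sign of $P_0+Q'_0-1$ gives a symmetry $\Sigma$ anti-commuting with $A_0$, and $U_1=\Sigma V$ does the intertwining there, while on the first summand the sign $J_0$ of $A_0$ itself serves as $U_2$. Your argument instead passes immediately to the Davis parametrization and then diagonalizes the whole situation via the \emph{fixed} eigenspace decomposition of $J_0$: every Davis symmetry becomes an off-diagonal block $\left(\begin{smallmatrix}0&b\\ b^*&0\end{smallmatrix}\right)$ with $b$ unitary and $M_+b=bM_-$, and transitivity reduces to the triviality that any two such $b$'s differ by left multiplication by a unitary commuting with $M_+$. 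This is shorter and more structural than the paper's approach, and it makes transparent that the Davis symmetries form a principal homogeneous space under the unitary group of $\{M_+\}'\subset\b(\h_0^+)$. The paper's more hands-on construction, on the other hand, has the virtue that the unitary is built directly out of $P_0,Q_0,P'_0,Q'_0$ via signs of their sums, which the authors then exploit to produce continuous local cross-sections in the case where $A^2-1$ has closed range.
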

\begin{proof}
Let $(P_0,Q_0), (P'_0,Q'_0)\in\d_{A_0}$. Denote by $V$ and $V'$ the symmetries (which anti-commute with $A_0$) which correspond to these pairs. Consider the decomposition
$$
\h_0=N(P_0+Q'_0-1)\oplus N(P_0+Q'_0-1)^\perp.
$$
Note that $N(P_0+Q'_0-1)$ reduces simultaneously both pairs $(P_0,Q_0)$, $(P'_0,Q'_0)$. First note that  $N(P_0+Q'_0-1)=N(P'_0+Q_0-1)$, because $P_0-Q_0=P'_0-Q'_0$. Also note that 
$$
N(P_0+Q'_0-1)=N(P_0-(1-Q'_0))= R(P_0)\cap R(1-Q'_0) \oplus N(P_0)\cap N(1-Q'_0)
$$
$$
=R(P_0)\cap N(Q'_0) \oplus N(P_0)\cap R(Q'_0),
$$
which reduces $P_0$ and $Q'_0$, and similarly for $P'_0$ and $Q_0$. 

In the second subspace $N(P_0+Q'_0-1)^\perp$, the operator $P_0+Q'_0-1$ is selfadjoint and has trivial nullspace; therefore, in the polar decomposition
$$
P_0+Q'_0-1=\Sigma |P_0+Q'_0-1|=|P_0+Q'_0-1|\Sigma,
$$
the operator $\Sigma$ is a symmetry which, by the same argument as in the proof of Theorem \ref{davis}, satifies
$$
\Sigma P_0|_{N(P_0+Q'_0-1)^\perp}\Sigma=Q'_0|_{N(P_0+Q'_0-1)^\perp}\ \  \hbox{ and } \ \  \Sigma Q'_0|_{N(P_0+Q'_0-1)^\perp}\Sigma=P_0|_{N(P_0+Q'_0-1)^\perp}.
$$
Then, 
$$
\Sigma A_0|_{N(P_0+Q'_0-1)^\perp} \Sigma =-A_0|_{N(P_0+Q'_0-1)^\perp}.
$$
The fact that $P_0+Q'_0-1=P'_0+Q_0-1$, implies that this  operator $\Sigma$ intertwines also the reductions of $P'_0$ and $Q_0$ to $N(P_0+Q'_0-1)^\perp$

The symmetry $V$, which is obtained (by means of the Borel functional calculus) as the sign function of $P_0+Q_0-1$, also is reduced by $N(P_0+Q'_0-1)$. Clearly, $V|_{N(P_0+Q'_0-1)^\perp}$ also anti-commutes with $A_0|_{N(P_0+Q'_0-1)^\perp}$.
Then, the unitary operator $U_1$ in $N(P_0+Q'_0-1)^\perp$ defined as
$$
U_1=\Sigma V|_{N(P_0+Q'_0-1)^\perp},
$$
commutes with $A_0|_{N(P_0+Q'_0-1)^\perp}$. Moreover, it satisfies
$$
U_1P_0|_{N(P_0+Q'_0-1)^\perp}U_1^*=\Sigma (VP_0V)|_{N(P_0+Q'_0-1)^\perp}\Sigma=\Sigma Q_0|_{N(P_0+Q'_0-1)^\perp}\Sigma=P'_0|_{N(P_0+Q'_0-1)^\perp}
$$
and, similarly,
$$
U_1Q_0|_{N(P_0+Q'_0-1)^\perp}U_1^*=Q'_0|_{N(P_0+Q'_0-1)^\perp}.
$$   

Let us find a unitary in the other subspace, $N(P_0+Q'_0-1)$. Trivially, $P_0|_{N(P_0+Q'_0-1)}=1-Q'_0|_{N(P_0+Q'_0-1)}$, and also
$P'_0|_{N(P_0+Q'_0-1)}=1-Q'_0|_{N(P_0+Q'_0-1)}$.
Note  that 
$$
A_0 P_0|_{N(P_0+Q'_0-1)}=(1-Q_0)P_0|_{N(P_0+Q'_0-1)}=P'_0(1-Q'_0)|_{N(P_0+Q'_0-1)}=P'_0A_0|_{N(P_0+Q'_0-1)},
$$
and, similarly,
$$
A_0 Q_0|_{N(P_0+Q'_0-1)}=Q'_0A_0|_{N(P_0+Q'_0-1)}.
$$
Then, again by the same argument as above (and as in the proof of Theorem \ref{davis}), the isometric  part $U_2$ in the polar decomposition of $A_0|_{N(P_0+Q'_0-1)}$ (which has trivial nullspace in the whole $\h_0$ and, thus also in $N(P_0+Q'_0-1)$), is a symmetry ($U_2=U_2^*$) which satisfies
$$
U_2P_0|_{N(P_0+Q'_0-1)}U_2=P'_0|_{N(P_0+Q'_0-1)}  \ \hbox{ and } \  U_2Q_0|_{N(P_0+Q'_0-1)}U_2=Q'_0|_{N(P_0+Q'_0-1)}
$$
In particular, this implies that $U_2$ commutes with $A_0|_{N(P_0+Q'_0-1)}$.
Consider, then,
$$ 
U=U_2\oplus U_1 \ \hbox{ acting in }  \ N(P_0+Q'_0-1)\oplus N(P_0+Q'_0-1)^\perp=\h_0.
$$
Clearly, $U$ is a unitary operator which commutes with $A_0$, and satisfies $U\cdot(P_0,Q_0)=(P'_0,Q'_0)$.
\end{proof}
The following result appeared in \cite{chinos}. It was proved there using a different technique: Shi, Ji and Du obtained a parametrization of $\d_{A_0}$, in terms of unitaries in a von Neumann algebra. The result  is proved here as an easy consequence of the above theorem:
\begin{coro}
$\d_{A_0}$ is connected. The connected components of $\d_A$ are parametrized by the connected components of the space of projections $\p(N(A))$ of the space $N(A)$.
\end{coro}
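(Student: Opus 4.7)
The plan is to derive both assertions of the corollary from the transitivity result of Theorem \ref{transitiva} together with the three-space decomposition introduced in Section 2.

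For the first statement, I would argue that the unitary group $\u_{\a_0}$ is (path-)connected, since $\a_0$ is a von Neumann algebra and every unitary in such an algebra admits a continuous path to the identity via the exponential of a bounded skew-adjoint element obtained from its Borel functional calculus. Fix any base pair $(P_0,Q_0)\in\d_{A_0}$ and consider the orbit map $\u_{\a_0}\to\d_{A_0}$, $U\mapsto(UP_0U^*,UQ_0U^*)$. This map is norm continuous and, by Theorem \ref{transitiva}, surjective. Hence $\d_{A_0}$ is the continuous image of a connected space, and therefore connected.

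For the second statement, I would use the factorization induced by the decomposition $\h=N(A)\oplus N(A^2-1)\oplus\h_0$, which simultaneously reduces every pair $(P,Q)\in\d_A$. As recorded in Remark~2.1, this gives a natural identification
$$
\d_A \;\cong\; \p(N(A)) \;\times\; \{(P_{N(A-1)},P_{N(A+1)})\} \;\times\; \d_{A_0},
$$
where the middle factor is a single point. This identification is a homeomorphism when all factors are endowed with the norm topology (because orthogonal direct sum of operators preserves norms and the reductions to the three invariant subspaces are continuous). Since $\d_{A_0}$ is connected by the first part and the middle factor is a point, the connected components of $\d_A$ are in bijection with those of $\p(N(A))$.

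The only point requiring any care is the connectedness of $\u_{\a_0}$; this is standard but should be invoked explicitly, and relies on the fact that $\a_0$ is a von Neumann algebra (so continuous functional calculus is available), not merely a Banach algebra. Everything else is a formal consequence of already established material.
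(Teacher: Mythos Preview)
Your proposal is correct and follows essentially the same approach as the paper: the paper's proof invokes the connectedness of the unitary group of a von Neumann algebra, the continuity of the orbit map $\pi_{(P_0,Q_0)}$ (surjective by Theorem~\ref{transitiva}), and then appeals to the three-space description of $\d_A$ from Section~2 for the component structure. Your version is a more detailed unpacking of exactly that argument.
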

\begin{proof}
$\u_{A_0}$ is the unitary group of a von Neumann algebra, therefore connected, and the map $\pi_{(P_0,Q_0)}$ is continuous. The assertion on the components of $\d_A$ follows form the description of $\d_A$ done in Section 1.
\end{proof} 
\begin{rem}
The unitary $U$ obtained above is, in fact, an explicit formula in  terms of $P_0,Q_0,P'_0$ and $Q'_0$. However, if one fixes for instance the pair $(P_0,Q_0)$, $U$ is not a continuous formula in terms of $(P'_0,Q'_0)$ (a continuous formula would provide a continuous global cross-section for the action). Indeed, the formula of $U$ depends on  the decomposition 
$\h_0=N(P_0+Q'_0-1)\oplus N(P_0+Q'_0-1)^\perp$. Or, equivalently, on  the map 
$$
Q'_0\mapsto P_{N(P_0+Q'_0-1)}=P_{N(P_0-(1-Q'_0)}.
$$
One can find trivial examples (in dimension $2$, for instance) where this map is not continuous.
\end{rem} 
However, in some cases  the action does have continuous {\it local} cross  sections. Let us show one such case.

Given a fixed $(P_0,Q_0)\in\d_{A_0}$, consider the continuous (surjective) map
$$
\pi_{(P_0,Q_0)}:\u_{\a_0}\to \d_{A_0} \ , \ \ \pi_{(P_0,Q_0)}(U)=U\cdot(P_0,Q_0)=(UP_0U^*,UQ_0U^*).
$$
\begin{lem} \label{cerrado}
$1-A^2$ has closed range if and only if   for any $(P_0,Q_0)\in\d_{\a_0}$,  $P_0+Q_0-1$ is invertible. 
\end{lem}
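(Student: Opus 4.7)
The plan is to reduce the question to the generic part and then use the algebraic identity
$$
(P_0+Q_0-1)^2 = 1 - A_0^2,
$$
which follows from a short direct expansion using $P_0^2=P_0$, $Q_0^2=Q_0$ and $A_0=P_0-Q_0$ (the cross terms $P_0Q_0+Q_0P_0$ cancel between the two squares). This identity shows that the operator $(P_0+Q_0-1)^2$ depends only on $A_0$, not on the particular pair, which already makes the ``for any'' in the statement equivalent to ``for some''.

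Next I would dismantle $1-A^2$ via the three-space decomposition $\h=N(A)\oplus N(A^2-1)\oplus \h_0$. On $N(A)$ the operator $1-A^2$ is the identity; on $N(A^2-1)$ it vanishes; on $\h_0$ it equals $1-A_0^2$. The range of $1-A^2$ is therefore $N(A)\oplus\{0\}\oplus R(1-A_0^2)$, and this is closed in $\h$ if and only if $R(1-A_0^2)$ is closed in $\h_0$.

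The key step is then the equivalence between closed range of $1-A_0^2$ and invertibility of $P_0+Q_0-1$ on $\h_0$. In the proof of Theorem \ref{davis} it is established that $S:=P_0+Q_0-1$ has trivial nullspace on $\h_0$; on the other hand $1-A_0^2$ also has trivial nullspace on $\h_0$ by the very definition of the generic part. Because $S$ is selfadjoint and $S^2=1-A_0^2$, standard spectral theory gives
$$
R(S)\ \text{closed}\ \Longleftrightarrow\ R(S^2)\ \text{closed}\ \Longleftrightarrow\ 0\notin\sigma(S^2).
$$
Combined with $N(S)=\{0\}$, closed range of $S$ is the same as $S$ being bounded below, hence invertible in $\b(\h_0)$. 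Chaining these equivalences with the reduction of the previous paragraph yields the result.

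The only mildly delicate point is justifying ``$R(S)$ closed iff $R(S^2)$ closed'' for selfadjoint $S$; I would handle it by invoking the spectral theorem for $S$, under which $S$ is multiplication by a real function $f$ on a measure space and closed range of $f$-multiplication is equivalent to $f$ being bounded away from $0$ on its support, a property that is clearly unchanged when $f$ is replaced by $f^2$. Everything else is bookkeeping on the three-space decomposition, and no obstruction arises from the ``for any $(P_0,Q_0)$'' quantifier since $(P_0+Q_0-1)^2$ is an invariant of $A_0$.
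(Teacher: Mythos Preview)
Your proof is correct and follows essentially the same approach as the paper: both hinge on the identity $(P+Q-1)^2=1-A^2$ together with the fact (from Theorem~\ref{davis}) that $P_0+Q_0-1$ has trivial nullspace on $\h_0$. Your version is a bit more explicit about the three-space reduction of $1-A^2$ and the spectral-theory step ``$R(S)$ closed $\Leftrightarrow$ $R(S^2)$ closed'', whereas the paper compresses these into one line, but the route is the same.
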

\begin{proof}
Suppose that $1-A^2$ has closed range. Note the formula (see \cite{kato} p. 33, or compute directly):
$$
(P-Q)^2+(P+Q-1)^2=1,
$$
or, equivalently, $1-A^2=(P+Q-1)^2$. It follows that $(P+Q-1)^2$ has closed range. In the generic part $\h_0$, 
$(P+Q-1)^2|_{\h_0}=(P_0+Q_0-1)^2$ has trivial nullspace. Thus, $(P_0+Q_0-1)^2$ is invertible, and, thus, also $P_0+Q_0-1$ is invertible.

Conversely, if $P_0+Q_0-1$ is invertible, then $(P_0+Q_0-1)^2$ is also invertible, and  then its extension   $(P+Q-1)^2=1-A^2$ (which is zero in $N(P+Q-1)$) has closed   range.
\end{proof}

\begin{rem}
Using Halmos decomposition, a simple computation shows  that $R(A^2-1)$ is closed, which means that $A_0^2-1$ is invertible if and only if  $S$ (or $\Gamma$)  is invertible in $\l$.
\end{rem}

For such $A$ as above, the map $\pi_{(P_0,Q_0)}$ has continuous {\it local } cross-sections.

\begin{prop}
Let $A\in\d$ such that $A^2-1$ has closed range. Then the map $\pi_{(P_0,Q_0)}$ has continuous local cross-sections.

\end{prop}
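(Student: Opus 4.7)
The plan is to exploit the explicit formula for the intertwining unitary constructed in the proof of Theorem \ref{transitiva}, observing that the hypothesis that $A^2-1$ has closed range forces $P_0+Q'_0-1$ to be invertible for $(P'_0,Q'_0)$ near the base pair, so that the two-piece formula of that proof collapses into a single closed-form expression depending continuously on the parameters.

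By Lemma \ref{cerrado} the hypothesis gives invertibility of $P_0+Q_0-1$. Since invertibility is an open condition and $Q'_0\mapsto P_0+Q'_0-1$ is continuous, there is an open neighborhood $\v$ of $(P_0,Q_0)$ in $\d_{A_0}$ on which $P_0+Q'_0-1$ remains invertible; on $\v$ the continuous functional calculus produces a genuine symmetry
$$
\Sigma(Q'_0)=(P_0+Q'_0-1)\,|P_0+Q'_0-1|^{-1}
$$
that depends norm-continuously on $Q'_0$. With $V$ the fixed Davis symmetry of the base pair, I define the candidate cross-section by $s(P'_0,Q'_0):=\Sigma(Q'_0)V$.

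To verify the required properties, the identity $(P_0-Q'_0)^2+(P_0+Q'_0-1)^2=1$ together with the selfadjointness of $P_0+Q'_0-1$ shows $\Sigma(Q'_0)A_0=-A_0\Sigma(Q'_0)$, and combined with $VA_0=-A_0V$ this yields $s(P'_0,Q'_0)\in\u_{\a_0}$. Using $\Sigma(Q'_0)P_0\Sigma(Q'_0)=Q'_0$ and $\Sigma(Q'_0)Q_0\Sigma(Q'_0)=P'_0$ (the second identity uses $P_0+Q'_0-1=P'_0+Q_0-1$, which follows from $P_0-Q_0=P'_0-Q'_0$), one obtains $s(P'_0,Q'_0)\cdot(P_0,Q_0)=(P'_0,Q'_0)$. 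Continuity of $s$ is immediate from the construction, and local cross-sections at other pairs are obtained by transporting $s$ along the continuous unitary action using Theorem \ref{transitiva}.

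The main obstacle, and the reason the closed-range hypothesis is required, is the one flagged in the Remark preceding the Proposition: the general transitivity formula depends on the orthogonal projection onto $N(P_0+Q'_0-1)$, whose dependence on $Q'_0$ is typically discontinuous. The closed-range hypothesis eliminates this kernel on $\v$ and leaves only the norm-continuous polar piece, which is the single point where the argument can fail for general $A\in\d$.
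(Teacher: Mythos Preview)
Your proof is correct and follows exactly the same route as the paper's: both define the local section by $s(P',Q')=sgn(P_0+Q'-1)\,V$ on the open set where $P_0+Q'-1$ is invertible (guaranteed near the base pair by Lemma~\ref{cerrado}), and then translate to other basepoints via the transitive action.

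One small correction in your write-up: the Kato identity $(P_0-Q'_0)^2+(P_0+Q'_0-1)^2=1$ that you cite involves $P_0-Q'_0$, not $A_0=P_0-Q_0$, so it does not by itself give $\Sigma A_0=-A_0\Sigma$. The anticommutation actually comes from the identity $P_0+Q'_0-1=P'_0+Q_0-1$ (which you do invoke a line later): applying the Davis intertwining to both presentations yields $\Sigma P_0\Sigma=Q'_0$ and $\Sigma Q_0\Sigma=P'_0$, whence $\Sigma A_0\Sigma=\Sigma(P_0-Q_0)\Sigma=Q'_0-P'_0=-A_0$. With this adjustment your argument is complete and matches the paper's.
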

\begin{proof}
Consider the set 
$$
\{(P',Q')\in\d_{A_0}: P_0+Q'-1  \hbox{ is invertible in }\h_0\}.
$$
Since the set of invertible operators is open, this set is clearly an open subset of $\d_{A_0}$ (considered with the relative topology of  $\b(\h_0)\times\b(\h_0)$). It is a neighbourhood of $(P_0,Q_0)$: if $(P',Q')=(P_0,Q_0)$, $P_0+Q_0-1$ is invertible, by the above Lemma. Then the map
$$
{\bf s}: \{(P',Q')\in\d_{A_0}: P_0+Q'-1  \hbox{ is invertible in }\h_0\}\to \u_{\a_0} , \ {\bf s}(P',Q')=sgn(P_0+Q'-1)V
$$
is continuous. Here $sgn(P_0+Q'-1)$ denotes the sign of the selfadjoint (invertible) operator $P_0+Q'-1)$, $sgn(P_0+Q'-1)=(P_0+Q'-1)\left((P_0+Q'-1)^2\right)^{-1/2}$. Note that the function $sgn$ is continuous on the set of invertible operators.  As seen above, it is an element of $\u_{\a_0}$ (called $\Sigma V$ in the proof of Theorem \ref{transitiva}). Also, it is clear that it is a cross section  in a neighbouthood of $(P_0,Q_0)$. One obtains cross sections around other points by translating this map using the transitive action.
\end{proof}

\begin{rem}\label{norma suma}
In Remark \ref{no hay par menor}, we observed that if  $(P_0,Q_0),(P'_0,Q'_0)$ are the  generic parts of  two pairs $(P,Q), (P',Q')\in\d_A$, the operators $P_0+Q_0$ and $P_0'+Q'_0$ are not comparable (an inequality implies equality). The transitivity of the action of $\u_{\a_0}$ on $\d_{A_0}$ implies that the  norms of these operators coincide. Indeed, since there exists a unitary operator $U$ in $\a_0$ such that $UP_0U^*=P'_0$ and $UQ_0U^*=Q'_0$, it follows that $U(P_0+Q_0)U^*=P'_0+Q'_0$, and therefore $\|P_0+Q_0\|=\|P'_0+Q'_0\|$. 
\end{rem}

\section{A presentation of $\a_0$ in terms of Halmos decomposition}

\begin{prop}
The algebra $\a_0$, represented in $\b(\l\times\l)$, consists of matrices of the form
$$
\{\left( \begin{array}{cc} X & Y \\ Y & Z \end{array}\right) : X,Y,Z\in\b(\l) \hbox{ commute with } \Gamma , \hbox{ and }  C(X-Z)+2SY=0 \}.
$$
\end{prop}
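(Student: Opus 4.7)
The plan is a direct computation in Halmos' representation. From $P_0=\left(\begin{array}{cc}1 & 0\\ 0 & 0\end{array}\right)$ and the given form of $Q_0$, one gets
$$
A_0 = P_0-Q_0 = \left(\begin{array}{cc} S^2 & -CS \\ -CS & -S^2\end{array}\right).
$$
I would then take an arbitrary $T=\left(\begin{array}{cc} X & W \\ Y & Z\end{array}\right)\in\b(\l\times\l)$ and read off the conditions imposed by $TA_0=A_0T$.

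The first step is to square $A_0$. Using $C^2+S^2=1$ and $CS=SC$, the off-diagonal blocks cancel and $A_0^2$ equals $S^2$ times the identity on $\l\times\l$. Hence, if $T$ commutes with $A_0$, then each of the four entries $X, W, Y, Z$ commutes with $S^2=\sin^2(\Gamma)$. Since $\Gamma\in[0,\pi/2]$ is positive with trivial nullspace, the functional calculus yields that commuting with $S^2$ is equivalent to commuting with $\Gamma$, and hence with both $C$ and $S$. This establishes the first part of the asserted description.

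With this commutativity in hand, the equation $TA_0=A_0T$ reduces to algebraic identities entry by entry. Collecting terms, the $(1,1)$ block yields $(W-Y)CS=0$, the $(1,2)$ block yields $CS(Z-X)=2WS^2$, and the remaining two blocks give the same two conditions. Because both $C$ and $S$ have trivial nullspace (the triviality of $N(C)$ was verified in the subsection on the Halmos decomposition), $CS$ has dense range, so the first identity forces $W=Y$. Factoring $S$ from the right in the second identity, by the same dense range argument, yields $C(Z-X)-2YS=0$, that is, $C(X-Z)+2SY=0$.

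The converse is then obtained by reversing these manipulations: granted that $W=Y$, that the entries commute with $\Gamma$, and that $C(X-Z)+2SY=0$, the four blocks of $TA_0-A_0T$ vanish after direct expansion. The only nonformal step in the argument is the cancellation of the factors $S$ (and $CS$) above, which is where the trivial nullspace of $S$ is essential; the rest is routine block-matrix algebra.
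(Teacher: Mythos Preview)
Your proposal is correct and follows essentially the same route as the paper: compute $A_0^2$ to force the entries to commute with $S^2$ (hence with $\Gamma$, $C$, $S$), then read off from the block equations that the off-diagonal entries coincide and that $C(X-Z)+2SY=0$, using that $C$ and $S$ have trivial nullspace. The only cosmetic difference is that the paper passes from $S^2$ to $\Gamma$ via $S=\sqrt{S^2}$, $C=\sqrt{1-S^2}$, and then $i\Gamma=\log(C+iS)$, whereas you invoke directly that $\sin^2$ is injective on $[0,\pi/2]$; note that the ``trivial nullspace of $\Gamma$'' is not what makes your step work---it is this injectivity of $\sin^2$ on the spectrum of $\Gamma$ that allows you to recover $\Gamma$ from $S^2$ by continuous functional calculus.
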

\begin{proof}
In $\l \times \l$,  $A_0$ is 
$$
\left( \begin{array}{cc} S^2 & -CS \\ -CS & -S^2 \end{array}\right).
$$
Let  $\left( \begin{array}{cc} X & Y_1 \\ Y_2 & Z \end{array}\right)$ be an operator which  commutes with $A_0$. Then, in particular, it commutes with $A_0^2$ which is given by
$$
\left( \begin{array}{cc} S^2 & 0 \\ 0 & S^2 \end{array}\right) .
$$
Then $X,Y_1,Y_2,Z$ commute with $S^2$. Therefore,  they commute also with its square root $|S|=S$, and  with $C$. Thus, $X,Y_1,Y_2,Z$ commute with $e^{i\Gamma}=C+iS$, and with its analytic logarithm $i\Gamma$ (since $\|\Gamma\|\le \pi/2<\pi$).
Straightforward computations  show that an operator lies  in the commutant of $A_0$ if and only if 
\begin{itemize}
\item 
 $CSY_1=CSY_2$,  which means that $Y_1=Y_2$, because   $C, S$  have  trivial nullspaces, and
\item
$S^2Y-CSZ=-CSX-S^2Y$, which, again using that $S$ has trivial nullspace, means that  $C(X-Z)+2SY=0$.
\end{itemize}
\end{proof}

\begin{rem}
Note that since $S=\sin(\Gamma)$ has trivial nullspace, then the (eventually unbounded, densely defined) operator $\tau=\tan(\Gamma)$ is defined, and the condition $C(X-Z)+2SY=0$ can be replaced by
$$
Z=X+2\tau Y.
$$
In particular, it implies that $\tau Y=Y\tau$ is bounded.
\end{rem}
\begin{rem}\label{isotropia}
It is also easy to characterize the unitaries in $\a_0$ which leave $(P_0,Q_0)$ fixed. They are the (unitary) matrices which commute  with $P_0$ and $Q_0$. The first relation implies that they must be diagonal matrices. Commutation with the second projection implies, after simple computations (using that $C$ has trivial nullspace),  that they are of the form
$$
\{W\in\u_{\a_0}:WP_0W^*=P_0 \hbox{ and } WQ_0W^*\}=\{ \left( \begin{array}{cc} W'  & 0 \\ 0 & W' \end{array}\right) : W'\in\u(\l), W'\Gamma=\Gamma W'\}.
$$
\end{rem}
Using this representation of the generic part, we can further analize the condition that $A^2-1$ has closed range. Recall from Lemma \ref{cerrado}, that this is equivalent to the invertibility of $P_0+Q_0-1$, for any pair $(P_0,Q_0)\in\d_{A_0}$.
\begin{prop}
The following are equivalent
\begin{enumerate}
\item
$A^2-1$ has closed range.
\item
$PQP-P$ has closed range.
\item
$P_0Q_0P_0-P_0$ is invertible in $R(P_0)$
\item
$\Gamma$ is invertible in $\l$.
\end{enumerate}
\end{prop}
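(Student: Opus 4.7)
The strategy is to combine the three-space decomposition $\h = N(A) \oplus N(A^2 - 1) \oplus \h_0$, which reduces every pair $(P,Q) \in \d_A$, with the explicit Halmos matrix representation of $(P_0, Q_0)$ in $\h_0 \cong \l \oplus \l$. I first address (3) $\Leftrightarrow$ (4): using the Halmos form $P_0 = \left( \begin{array}{cc} 1 & 0 \\ 0 & 0 \end{array} \right)$ and $Q_0 = \left( \begin{array}{cc} C^2 & CS \\ CS & S^2 \end{array} \right)$, a direct computation yields
\[
P_0 Q_0 P_0 - P_0 = \left( \begin{array}{cc} -S^2 & 0 \\ 0 & 0 \end{array} \right),
\]
whose restriction to $R(P_0) \cong \l$ is $-S^2$. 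Since $S = \sin \Gamma$, $0 \le \Gamma \le \pi/2$, and $\Gamma$ has trivial nullspace (generic position), the functional calculus shows that $S^2$ (equivalently $S$) is invertible on $\l$ iff $\Gamma$ is, giving (3) $\Leftrightarrow$ (4).

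For (2) $\Leftrightarrow$ (3), I would decompose $PQP - P$ along the three-space decomposition. On $N(A)$ every pair in $\d_A$ satisfies $P = Q$, hence $PQP - P = 0$; on $N(A^2 - 1) = N(A-1) \oplus N(A+1)$ the projections $P$ and $Q$ are orthogonal, so $PQ = 0$ and $PQP - P = -P$ is minus a projection. Both auxiliary summands therefore contribute closed range trivially, and (2) reduces to closed range of $P_0 Q_0 P_0 - P_0$ on $\h_0$. The matrix above shows this operator has range contained in $R(P_0)$ and acts injectively there, so closed range on $\h_0$ is equivalent to invertibility on $R(P_0)$, namely (3).

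Finally, (1) $\Leftrightarrow$ (4) is essentially a restatement of Lemma \ref{cerrado}. By that lemma, (1) is equivalent to invertibility of $P_0 + Q_0 - 1$ on $\h_0$; via the identity $(P_0 + Q_0 - 1)^2 = 1 - A_0^2$ combined with the Halmos expression $A_0 = \left( \begin{array}{cc} S^2 & -CS \\ -CS & -S^2 \end{array} \right)$, this translates into a scalar invertibility condition on $\l$, which by the remark following the computation of $\a_0$ in Section 3 is equivalent to (4).

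The main obstacle is the bookkeeping of the reductions: for each operator in question one must verify that on the two auxiliary summands $N(A)$ and $N(A^2 - 1)$ the closed-range or invertibility condition is automatic. After that, everything is an exercise in computing $2 \times 2$ block operator matrices built from $C$, $S$, and $\Gamma$, using the identities $C^2 + S^2 = I$ and $CS = SC$.
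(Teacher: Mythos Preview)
Your proposal is correct and follows essentially the same approach as the paper: reduce to the generic part $\h_0$, compute $P_0Q_0P_0-P_0$ in the Halmos model to obtain $-S^2$ on $R(P_0)$, and observe that invertibility of $S$ is equivalent to invertibility of $\Gamma$. The only minor difference is that for (1)$\Leftrightarrow$(4) the paper computes $A_0^2-1=\left(\begin{smallmatrix} -S^2 & 0\\ 0 & -S^2\end{smallmatrix}\right)$ directly in the Halmos representation rather than routing through Lemma~\ref{cerrado}, but this is just a cosmetic variation; your more detailed verification of the reduction step on $N(A)$ and $N(A^2-1)$ is a welcome addition to the paper's terse ``clearly, it suffices to examine the reductions to the generic part.''
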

\begin{proof}
Clearly, it suffices to examine the reductions to the generic part $\h_0$. Using Halmos representation, one gets
$$
A_0^2-1=\left(\begin{array}{cc} -S^2 & 0 \\  0 &  -S^2 \end{array} \right),
$$
and  $P_0Q_0P_0-P_0 =-S^2=-\sin(\Gamma)$. The equivalence of these conditions becomes apparent.
\end{proof}
\section{Examples}
We  present  examples of operators $A$, which will be the object of further study. The first one has continuous spectrum.
\begin{ejem}\label{mt}
Let $\h=L^2(-1,1)$ and $A=M_t$ (multiplication by the variable): $Af(t)=tf(t)$. Note that $A$ anti-commutes with the symmetry $V$, $Vf(t)=f(-t)$. Therefore $A=P_V-Q_V$, following the notation of Davis' characterization in Section 1, and both projections can be computed explicitly. Since $A$ has no eigenvalues, it follows that $\h_0=\h$ (i.e., $P_V,Q_V$ or any pair of projections with difference $A$ are in generic position). Also note that the algebra $\a$ is $L^\infty(-1,1)$, represented as multiplication operators in $\h$.  Therefore, if one chooses to parametrize elements  in $\d_A$ by means of isometries, $\d_A$ consists of all symmetries $V_\varphi$ of the form
$$
V_\varphi=M_\varphi V M_{\bar{\varphi}},
$$
for $\varphi\in L^\infty(-1,1)$, with $|\varphi(t)|=1 \ a.e.$, modulo the commutant of $V$, i.e., the unimodular functions of $L^\infty(-1,1)$ which are {\it essentially even}. Explicitely,
$$
V_\varphi f(t)=\varphi(t)\bar{\varphi}(-t)f(t),
$$
modulo the functions $\varphi$ such that $\varphi(t)=\varphi(-t)$ for almost every $t\in(-1,1)$.
\end{ejem}
The second example has pure point spectrum.
\begin{ejem}\label{piqj}
Let $I,J\subset\mathbb{R}^n$ be Lebesgue measurable sets with positive finite measure. Consider $\h=L^2(\mathbb{R}^n)$ and the projections 
$$
P_I=M_{\chi_I} \ \hbox{ and } Q_J=\f^{-1}P_J\f
$$
where $\chi_D$ is the characteristic function of $D\subset\mathbb{R}^n$ and $\f:\h\to\h$ is the Fourier-Plancherel transform. These pairs have been studied in connection with mathematical formulation of the uncertainty principle (see \cite{lenard}, the survey \cite{folland} or the book \cite{havin}). Specifically, the products $P_IQ_J$, $P_IQ_JP_I$ are of interest. Among the basic facts concerning these operators, it is known that they are Hilbert-Schmidt operators, and that
$$
R(P_I)\cap R(Q_J)=R(P_I)\cap N(Q_J)=N(P_I)\cap R(Q_J)=\{0\},
$$
and $N(P_I)\cap N(Q_J)$ is infinite dimensional (see for instance  \cite{lenard}). In particular, $P_IQ_JP_I$ has a complete orthonormal system of eigenvectors (i.e., $P_IQ_JP_I$ is diagonalizable). In \cite{pqvsp-q} it was proved that for a pair of projections $P,Q$, $PQP$ is diagonalizable if and only if $P-Q$ is diagonalizable.  In this case, there is an explicit relation between the eigenvectors and eigenvalues of $PQP$ and $P-Q$. 
 If  $s_n$ are the eigenvalues of $PQP$ ($0<s_n<1$), then  $\pm\lambda_n=\pm(1-s_n)^{1/2}$ are the eigenvalues of $P-Q$. The eigenvalue $s=1$ corresponds with $\lambda=0$.

Thus, our second example $A=P_I-Q_J$ is diagonalizable.  Moreover, 
$$
\h_0=N(A)^\perp=\left(N(P_I)\cap N(Q_J)\right)^\perp.
$$

In the particular case $I=(0,1)$ and $J=(-\Omega/2,\Omega/2)$ the eigenvectors are known (called prolate spheroidal functions \cite{slepian}, \cite{havin}), and the eigenvalues have simple multiplicity. Therefore, in this case $\a_0$ consists of all diagonal matrices in this orthonormal basis. In particular, $\a_0$ is commutative, as in the previous example.

Let us characterize  in this example the symmetries which anti-commute with $A_0$. Note that this implies that $V$ commutes with $A_0^2$. Therefore $V$ has block diagonal form, with blocks of size $2\times 2$, generated, for each fixed $n\ge 1$,  by   the eigenvectors $e_n, f_n$ of $\lambda_n$ and $-\lambda_n$, respectively. Note that $Ve_n$ is an eigenvector for $-\lambda_n$:
$$
A_0Ve_n=-VA_0e_n=-\lambda_nVe_n.
$$
Thus, since  in this case all eigenvalues have multiplicity one,  $Ve_n=\omega_nf_n$, for some $\omega_n\in\mathbb{C}$ with $|\omega_n|=1$ . Similarly, $Vf_n$ is an unimodular multiple of $e_n$. The fact that $V^2=1$ implies that $Vf_n=\bar{\omega}_ne_n$. Therefore, any symmetry $V$ anti-commuting with $A_0$ (in $\h_0$) is of the form
$$
V=V_\omega=\oplus_{n=1}^\infty V_{\omega_n}\in \oplus_{n=1}^\infty \h_n \ ,  \ \ \hbox{ where } \ \  V_{\omega_n}=\left( \begin{array}{cc} 0 & \omega_n \\ \bar{\omega}_n & 0 \end{array} \right) 
$$
and $\h_n$ is the subspace spanned by $e_n$ and $f_n$. That is, the elements of $\d_{A_0}$  can be parametrized by sequences $\omega=\{\omega_n\}$ of complex numbers of modulus $1$.
\end{ejem}
\begin{rem}\label{entradas conmutan}
If we consider the $2\times 2$ matrix representation of the previous section, elementary computations show that if $\a_0$ is commutative, then all entries in the matrices commute.
\end{rem}
\begin{ejem}
Consider $\h=L^2(\mathbb{T})$,  put $\h^+=H^2(\mathbb{T})$, $P_+=P_{\h^+}$. Let ${\bf a}=\{a_1,\dots, a_N\}$, ${\bf b}=\{b_1,\dots,b_N\}$ two (finite) sequences  of points in the open disk $\mathbb{D}$. We suppose that $a_i\ne a_j$ if $i\ne j$ and $a_i\ne b_j$ for all $i,j$. Let $B_{\bf a}$, $B_{\bf b}$ be the corresponding Blaschke products. Put $P_{\bf a}=P_{B_{\bf a}\h^+}$, and similarly $P_{\bf b}=P_{B_{\bf b}\h^+}$. Note that since the multiplication operator $M_{B_{\bf a}}$ is a unitary operator in $\h$, $P_{\bf a}=M_{B_{\bf a}} P_+ M_{\bar{B}_{\bf a}}$ and similarly  $P_{\bf b}=M_{B_{\bf b}} P_+ M_{\bar{B}_{\bf b}}$. Also note that, if ${\bf a}\# {\bf b}=\{a_1,\dots, a_N,b_1,\dots, b_N\}$, then
$$
\h_{\bf a}\cap \h_{\bf b}=\h_{{\bf a}\# {\bf b}} \ ;
$$
$$
\h_{\bf a}\cap \h_{\bf b}^\perp = \h_{\bf a}^\perp \cap \h_{\bf b}=\{0\}.
$$
This is an exercise which follows from the fact  that ${\bf a}$ and ${\bf b}$ have the same cardinality (see also \cite{grassH2} for a proof). Moreover,
$$
\h_{\bf a}^\perp\cap \h_{\bf b}^\perp=(\h_{\bf a}\cup  \h_{\bf b})^\perp=(\h^+)^\perp=\h^-
$$
because $B_{\bf a}$ and $B_{\bf b}$ are coprime inner functions ($a_i\ne b_j$). Therefore, the generic part $\h_0$ equals the model space
$$
\h_0=\h^+\ominus \h_{{\bf a}\# {\bf b}},
$$
spanned by $\{k_{a_1},\dots, k_{a_N},k_{b_1},\dots, k_{b_N}\}$, where $k_c$ denotes the reproducing kernel  of $\h^+$ at $c\in\mathbb{D}$. Since we have chosen different points $a_i$, $b_j$ in the disk, these functions are linearly independent, and $\h_0$ has dimension $2N$. Thus $\a_0$ is a finite dimensional algebra. The operator $A_0=P_{\bf a}-P_{\bf b}|_{\h_0}$ is a $2N\times 2N$ matrix. Let us compute $A_0$ in the elements of the basis $k_{a_i}, k_{b_j}$. First, note that $k_{a_i}\in \h_a^\perp$: if $h\in\h^+$, $\langle B_{\bf a}h, k_{a_i}\rangle= B_{\bf a}(a_i)h(a_i)=0$. Then
$$
P_{\bf a} k_{a_i}=0 \ , \ \ P_{\bf b} k_{b_j}=0.
$$
Note that 
$$
P_+\bar{B}_{\bf a} k_{b_j}=\sum_{l\ge 0} \langle \bar{B}_{\bf a} k_{b_j}, z^l\rangle z^l=\sum_{l\ge 0}\langle k_{b_j}, B_{\bf a} z^l\rangle z^l=\sum_{l\ge 0} \overline{B_{\bf a}(b_j )} (\bar{b}_jz)^l= \overline{B_{\bf a}(b_j )}\frac{1}{1-\bar{b}_jz}=\overline{B_{\bf a}(b_j )}k_{b_j}.
$$
Then, $P_ak_{b_j}=M_{B_{\bf a}}P_+M_{\bar{B}_{\bf a}}k_{b_j}=\overline{B_{\bf a}(b_j )}B_{\bf a}k_{b_j}$. Similarly for $P_{\bf b}k_{a_i}$. Then,
$$
A_0k_{a_i}= -\overline{B_{\bf b}(a_i )}B_{\bf b}k_{a_i}
$$
and 
$$
A_0k_{b_j}= \overline{B_{\bf a}(b_j )}B_{\bf a}k_{b_j}.
$$
Thus, in principle, it is possible to compute the $2N\times 2N$ matrix of $A_0$ in the (non-orthogonal) basis of the reproducing kernels $k_{a_i}, k_{b_j}$. It would be interesting to know if under the present assumptions, $A_0$ has eigenvalues of simple multiplicity.
\end{ejem}

\begin{ejem}\label{idempotente}
Let $E\in\b(\h)$ be a non-selfadjoint  idempotent operator ($E^2=E$). Consider the orthogonal projections $P_{R(E)}$ and $P_{R(E^*)}=P_{N(E)^\perp}$. In matrix form, in terms of the decomposition $\h=R(E)\oplus R(E)^\perp$, $E$ is written
$$
E=\left( \begin{array}{cc} 1 & B \\ 0 & 0 \end{array} \right),
$$
where $B: R(E)^\perp \to R(E)$. 
Consider the selfadjoint operator $S=E+E^*-1$. $S$ is selfadjoint with trivial nullspace, which satisfies $SE=E^*S$ and $SE^*=ES$. Then, similarly as before, the unitary part of $S$ in the polar decomposition intertwines $E$ and $E^*$. Then, it also intertwines the range projections. Moreover, by straightforward matrix computations (which were done explicitly in \cite{timisoara}), this unitary part coincides with Davis' symmetry  $V$ for the pair of projections $P_{R(E)}, P_{R(E^*)}$.
Also note the well known formulas
$$
P_{R(E)}=ES^{-1} \ \hbox{ and } \ P_{R(E^*)}=E^*S^{-1},
$$
so that 
\begin{equation}\label{AdeB}
A=(E-E^*)S^{-1}.
\end{equation} 
Note that $R(E)\cap N(E)=\{0\}$ and $R(E^*)\cap N(E^*)=\{0\}$. 
Straightforward computation show that 
$$
R(E)\cap R(E^*)=N(B^*)\  \hbox{ and }\  N(E^*)  \cap N(E)=N(B).
$$ 
Thus, in order that $P_{R(E)}$ and $P_{R(E^*)}$ be in generic position,  $B$  should have dense range and trivial nullspace. Let us assume this. In particular it implies that $\dim R(E)=\dim R(E)^\perp$. 
Clearly, if we want to study the structure of $\d_A$, we can replace $E$ with $UEU^*$, where $U:\h\to \j$ is a unitary transformation.  Thus, $A=P_{R(E)}-P_{R(E^*)}$ is replaced by $UAU^*$. Pairs in $\d_A$ are mapped onto pairs in $\d_{UAU^*}$ by means of $(P,Q)\mapsto (UPU^*,UQU^*)$. Therefore  (by the equality of dimensions between $R(E)$ and $R(E)^\perp$) we can choose a model $\j=\l\times\l$. Next consider the polar decomposition of $B$, $B=W_0|B|$. Clearly $W_0:\l\to\l$ is a unitary operator. Consider the unitary operator $W$ in $\l\times\l$ given by $W=\left(\begin{array}{cc} W_0^* & 0 \\ 0 & 1 \end{array}\right)$. Then,
$$
WEW^*=\left(\begin{array}{cc} 1 & W_0^*B \\ 0 & 0 \end{array}\right)=\left(\begin{array}{cc} 1 & |B| \\ 0 & 0 \end{array}\right).
$$
Summarizing, we can suppose that $\h=\l\times\l$ and $B$ is positive with trivial nullspace. Therefore, with the current assumptions, using (\ref{AdeB}), one has
$$
A_0=A=\left(\begin{array}{cc} B^2(1+B^2)^{-1} & -B(1+B^2)^{-1} \\  -B(1+B^2)^{-1} & -B^2(1+B^2)^{-1}\end{array} \right) .
$$
In order to describe $\a_0=\{A_0\}'$, note that $A_0^2=\left(\begin{array}{cc} B^2(1+B^2)^{-1} & 0 \\  0 & B^2(1+B^2)^{-1}\end{array} \right)$. Therefore, if $X=\left(\begin{array}{cc} X_{11} & X_{12} \\  X_{21} & X_{22} \end{array} \right)$ belongs to $\a_0$, in particular it commutes with $A_0^2$. This clearly implies that the entries $X_{ij}$ commute with $B$ (recall that $B\ge 0$). Next,  note that the condition that $X$ commutes with $A_0$ means that
$$
BX_{12}=X_{21}B=BX_{21},
$$
which implies $X_{12}=X_{21}$, because $N(B)=0$, and that
$$
BX_{22}-X_{11}B=B(X_{22}-X_{11})=2B^2X_{12}, 
$$
which implies that $X_{22}=2BX_{12}+X_{11}$. Therefore 
$$
\a_0=\{ \left( \begin{array}{cc} Y & Z \\ Z & Y+2BZ \end{array} \right): Y,Z \hbox{ commute with } B\}.
$$
Let us describe the isotropy subalgebra, i.e., the operators which commute with $P_{R(E)}$ and $P_{R(E^*)}$. Operators which commute with $P_{R(E)}=\left( \begin{array}{cc} 1 & 0 \\ 0 & 0 \end{array} \right)$, are diagonal matrices. If they belong additionally to $\a_0$, they are of the form 
$$
\{ \left( \begin{array}{cc} Y & 0 \\ 0 & Y \end{array} \right): Y \hbox{ commutes with } B\}.
$$
Easy examples (of positive operators $B$) , show that the isotropy subalgebra, and therefore $\a_0$, may not be commutative.
\end{ejem}
\section{A regular structure for $D_A$}
We shall prove that  $\d_A$ is an homogeneous $C^\infty$ space of the unitary group $\u_{\a}$. If  $A^2-1$ has closed range, then $\d_A$ is additionally a complemented submanifold of $\p(\h)\times \p(\h)$.
As seen above, $\d_A$ decomposes as three spaces in the decomposition (\ref{3space}). 
\begin{itemize}
\item
In $N(A)$, the group acting is the whole unitary group of $N(A)$, and the space $\d_A$ reduces to pairs of the form $(E,E)$, where $E\in\p(\h)$ and  $R(E)\subset N(A)$, i.e., $\d_A|_{N(A)}$ identifies with the space of projections in the Hilbert space $N(A)$, under the action of the unitary group of $N(A)$. This space is well studied: it is a $C^\infty$ complemented submanifold of $\b(N(A))$ (see \cite{cpr}). 
\item
In $N(A^2-1)$, $\d_A|_{N(A^2-1)}$ is a single point , namely, $(P_{N(A-1} , P_{N(A+1)})$.
\item
Therefore, the task is reduced to show that $\d_{A_0}$ has local regular structure.
\end{itemize}

In order to prove that $\d_{A_0}$ has differentiable structure, and also in order to define later a linear connection in this manifold, the following map will be useful:
\begin{defi}\label{esperanza}
Fix $(P_0,Q_0)\in\d_{A_0}$, and fix also a Halmos decomposition for this pair. 
Consider the map $E_{(P_0,Q_0)}: \a_0\to \a_0$,
 \begin{equation}\label{formula esperanza}
 E_{(P_0,Q_0)}(\left( \begin{array}{cc} X & Y \\ Y & Z \end{array}\right))= \left( \begin{array}{cc} \frac12(X+Z) & 0 \\ 0 & \frac12(X+Z) \end{array}\right).
\end{equation}
\end{defi}
It is easy to see that this map is a conditional expectation, with range equal to the subalgebra of elements in $\a_0$ which commute with $P_0$ and $Q_0$ (see Remark \ref{isotropia}). 

\begin{rem}\label{independencia de Halmos}
Let us prove that the conditional expectation $E_{(P_0,Q_0)}$ depends only on the pair $(P_0,Q_0)$ (and not on the Halmos decomposition). Indeed, first note that $Q_0-P_0Q_0P_0-P_0^\perp Q_0P_0^\perp=\left(\begin{array}{cc} 0 & CS \\ CS & 0 \end{array}\right)$. Let us denote this operator by $K$. Since $C$ and $S$ have trivial nullspace, then $N(K)=0$. Also, it is clear that $K^*=K$. Then, in the polar decomposition of $K$,
$$
K=W|K|=W \left( \begin{array}{cc} CS & 0 \\ 0 & CS \end{array}\right);
$$
using again that $C$ and $S$ have trivial nullspaces, it follows that it must be $W=\left( \begin{array}{cc} 0 & 1 \\ 1 & 0 \end{array}\right)$. The operator $W$ is obtained by means of the functional calculus of $K$: $W=sgn(K)$, where $sgn$ denotes the (Borel, eventually non continuous)  sign function ($sgn(t)=1$ if $t\ge 0$, $-1$ if $t<0$). Note that $P_0W=\left( \begin{array}{cc} 0 & 1 \\ 0 & 0 \end{array}\right)$ and $WP_0=\left( \begin{array}{cc} 0 & 0 \\ 1 & 0 \end{array}\right)$. Then, if $M=\left( \begin{array}{cc} X & Y \\ Y & Z \end{array}\right)$, we get
$$
E_{(P_0,Q_0)}(M)=\frac12 P_0(M+WMW)P_0+\frac12 P_0^\perp(M+WMW)P_0^\perp .
$$
\end{rem}

\begin{defi}\label{el horizontal}
As above, fix $(P_0,Q_0)\in\d_{A_0}$ and a Halmos decomposition for this pair. Denote
 \begin{equation}\label{reductivo}
\mathbb{H}_{(P_0.Q_0)}:=N(E_{(P_0,Q_0)}\cap (\a_0)_{ah}=\{\left(\begin{array}{cc} -Y\tau & Y \\ Y & Y\tau \end{array} \right): Y^*=-Y,  Y\Gamma=\Gamma Y, Y\tau \hbox{ is bounded in } \l\}.
\end{equation}
Note that  if $Y\tau$ is bounded, then  $Z-X=2Y\tau$. 
\end{defi}
\begin{rem}\label{CONMUTAN}
Let us point out the  fact, which will be relevant later, that for any (matrix) element in $\mathbb{H}_{(P_0,Q_0)}$, all entries of the matrix commute. Indeed,  $Y$ commutes with $\tau=\tan(\Gamma)$.
\end{rem}

In order to study  the local structure of $\d_{A_0}$ we first suppose that $R(A^2-1)$ is closed. In this case we shall prove that $\d_{A_0}$ is a submanifold of $\b(\h_0)\times\b(\h_0)$ (as well a a homogeneous space of $\u_{\a_0}$). To prove this fact we shall need the following lemma, which is an application of the inverse function theorem in Banach spaces. One can find a detailed and elementary  proof of this fact in \cite{rae}.

\begin{lem}\label{raeburn}
Let $G$ be a Banach-Lie group acting smoothly on a Banach space $X$. For a fixed
$x_0\in X$, denote by $\pi_{x_0}:G\to X$ the smooth map $\pi_{x_0}(g)=g\cdot
x_0$. Suppose that:
\begin{enumerate}
\item
$\pi_{x_0}$ is an open mapping,  regarded as a map from $G$ onto the orbit
$\{g\cdot x_0: g\in G\}$ of $x_0$ (with the relative topology of $X$).
\item
The differential $d(\pi_{x_0})_1:(TG)_1\to X$ splits: its nullspace and range are
closed complemented subspaces in the Banach-Lie algebra $\mathcal{G}$ of $G$ and $X$, respectively.
\end{enumerate}
Then, the orbit $\{g\cdot x_0: g\in G\}$ is a smooth submanifold of  $X$, and the
map
$$
\pi_{x_0}:G\to \{g\cdot x_0: g\in G\}
$$ 
is a smooth submersion.
\end{lem}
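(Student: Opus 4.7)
The plan is to apply the Banach-space inverse function theorem in a chart adapted to the splittings provided by hypothesis (2), and then to use the openness hypothesis (1) to identify the orbit as a slice in that chart. Following (2), write $\mathcal{G} = \mathcal{G}_0 \oplus \mathcal{G}_1$ with $\mathcal{G}_0 = \ker d(\pi_{x_0})_1$ and $\mathcal{G}_1$ a closed complement, and $X = X_0 \oplus X_1$ with $X_0 = \mathrm{Range}\, d(\pi_{x_0})_1$ and $X_1$ a closed complement. By the open mapping theorem, the restriction $\Lambda := d(\pi_{x_0})_1|_{\mathcal{G}_1} : \mathcal{G}_1 \to X_0$ is a topological isomorphism of Banach spaces.

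Using the Banach-Lie exponential $\exp : \mathcal{G} \to G$, I would define the smooth map
$$
\Phi : \mathcal{G}_1 \oplus X_1 \longrightarrow X, \qquad \Phi(\xi, s) := \exp(\xi) \cdot x_0 + s,
$$
on a neighborhood of $(0,0)$; its differential at $(0,0)$ sends $(\xi, s)$ to $\Lambda\xi + s \in X_0 \oplus X_1 = X$, which is an isomorphism. The inverse function theorem then yields open neighborhoods $U$ of $(0,0)$ in $\mathcal{G}_1 \oplus X_1$ and $W$ of $x_0$ in $X$ on which $\Phi : U \to W$ is a diffeomorphism. Set $M := \Phi(U \cap (\mathcal{G}_1 \oplus \{0\})) = \{\exp(\xi)\cdot x_0 : (\xi, 0) \in U\}$; this is a smooth embedded submanifold of $X$ passing through $x_0$ and contained in the orbit $\mathcal{O} := \{g \cdot x_0 : g \in G\}$.

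The decisive step is to show that, after possibly shrinking $W$, one has $\mathcal{O} \cap W = M$. One inclusion is automatic; for the other, it is enough to see that $M$ is open in $\mathcal{O}$ with the relative topology of $X$, since then $M = \mathcal{O} \cap W''$ for some open $W'' \subset X$ and replacing $W$ by $W \cap W''$ forces $\mathcal{O} \cap W = M$. Here hypothesis (1) is essential: using the local product decomposition $(\eta, \xi) \mapsto \exp(\eta)\exp(\xi)$ one parametrizes an open neighborhood $V$ of $1$ in $G$ by $\mathcal{G}_0 \times \mathcal{G}_1$, and openness of $\pi_{x_0}$ forces $\pi_{x_0}(V)$ to be open in $\mathcal{O}$; combining this with the chart $\Phi$ one verifies that $M$ contains a relatively open neighborhood of $x_0$ in $\mathcal{O}$. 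A detailed and elementary execution of this passage is given in \cite{rae}.

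Once $\mathcal{O}$ is a smooth submanifold of $X$ near $x_0$, the diffeomorphism $y \mapsto g_0 \cdot y$ of $X$ preserves $\mathcal{O}$ and transports the chart to any other point $g_0 \cdot x_0$, so $\mathcal{O}$ is globally a smooth submanifold. The differential $d(\pi_{x_0})_1$ surjects onto $T_{x_0}\mathcal{O} = X_0$ with closed complemented kernel $\mathcal{G}_0$, and left translation in $G$ propagates the surjectivity and the splitting to every point, so $\pi_{x_0}: G \to \mathcal{O}$ is a smooth submersion. The main obstacle is the identification $\mathcal{O} \cap W = M$: without (1), $\mathcal{O}$ could approach $x_0$ through points having a nonzero $X_1$-component in the chart $\Phi$, and $M$ would then fail to be a neighborhood of $x_0$ in $\mathcal{O}$; openness of $\pi_{x_0}$ is precisely the ingredient that forbids this pathology.
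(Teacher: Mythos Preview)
The paper does not actually prove this lemma: immediately before the statement it says ``One can find a detailed and elementary proof of this fact in \cite{rae}'', and no argument is given. Your sketch is the standard Raeburn-type argument (inverse function theorem applied to a map built from the exponential and the complementary slice, followed by the openness hypothesis to identify the orbit locally with the slice), which is exactly what \cite{rae} does, so your approach is consistent with the paper's intended reference.
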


\begin{prop}
Suppose that  $A^2-1$ has closed range. Then, $\d_{A_0}$ is a complemented $C^\infty$ submanifold of $\b(\h)\times\b(\h)$, and for any fixed $(P_0,Q_0)\in\d_{A_0}$, the map
$$
\pi_{(P_0,Q_0)}:\u_{\a_0}\to \d_{A_0} \ , \ \pi_{(P_0,Q_0)}(U)=(UP_0U^*,UQ_0U^*)
$$
 is a $C^\infty$ submersion.
\end{prop}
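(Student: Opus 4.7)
The plan is to apply Lemma \ref{raeburn} to the smooth action of $G=\u_{\a_0}$ on $X=\b(\h_0)\times\b(\h_0)$ at the point $x_0=(P_0,Q_0)$; by Theorem \ref{transitiva} the orbit of $(P_0,Q_0)$ is exactly $\d_{A_0}$. Two hypotheses must be verified.

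For openness of $\pi_{(P_0,Q_0)}$, I would invoke the preceding proposition (available because $R(A^2-1)$ is closed) to obtain a continuous local cross-section at $(P_0,Q_0)$, and then translate this cross-section by elements of $\u_{\a_0}$ using the transitivity from Theorem \ref{transitiva} to produce continuous local cross-sections around every point of $\d_{A_0}$. Existence of continuous local cross-sections is well known to imply openness of $\pi_{(P_0,Q_0)}$ onto its image.

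For the splitting of the differential, a direct computation yields
$$
d\pi_{(P_0,Q_0)}|_1(X)=([X,P_0],[X,Q_0]),\qquad X\in(\a_0)_{ah}.
$$
Since $X$ commutes with $A_0=P_0-Q_0$, $[X,P_0]=[X,Q_0]$, so the image lies inside the diagonal of $\b(\h_0)\times\b(\h_0)$. The kernel is the anti-hermitian part of the isotropy subalgebra described in Remark \ref{isotropia}, which by formula (\ref{formula esperanza}) coincides with the range of $E_{(P_0,Q_0)}$ intersected with $(\a_0)_{ah}$. The bounded conditional expectation $E_{(P_0,Q_0)}$ therefore furnishes the direct-sum decomposition
$$
(\a_0)_{ah}=\hh\oplus\ker d\pi_{(P_0,Q_0)}|_1,
$$
exhibiting the kernel as a closed complemented subspace of $(\a_0)_{ah}$ with complement $\hh$ (Definition \ref{el horizontal}).

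Closedness and complementedness of the range in $\b(\h_0)\times\b(\h_0)$ are the remaining work. Because $R(A^2-1)$ is closed, $\Gamma$ is invertible and $\tau=\tan\Gamma$ is bounded, so $\hh$ consists of Halmos matrices of the form $(-Y\tau,Y;Y,Y\tau)$ with $Y^*=-Y$ and $[Y,\Gamma]=0$, and $d\pi_{(P_0,Q_0)}|_\hh$ is a bounded linear injection onto the range, which is the set of pairs $\bigl((0,-Y;Y,0),(0,-Y;Y,0)\bigr)$ with $Y^*=-Y$ and $[Y,\Gamma]=0$. This is a closed linear subspace. To complement it in $\b(\h_0)^2$ I would compose four bounded linear maps: (i) the averaging $(R,S)\mapsto((R+S)/2,(R+S)/2)$ onto the diagonal; (ii) within the diagonal copy of $\b(\l\times\l)$, extraction of the off-diagonal antisymmetric part $M\mapsto \frac{1}{2}(M_{21}-M_{12})\in\b(\l)$; (iii) the anti-hermitian projection $Y\mapsto \frac{1}{2}(Y-Y^*)$; and (iv) a bounded projection $\b(\l)\to\{\Gamma\}'$. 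Step (iv) is the main obstacle, and I would dispose of it by noting that $\{\Gamma\}'$ is an injective von Neumann algebra---being type I as a direct integral of type I factors over the spectral resolution of $\Gamma$---so a normal conditional expectation $\b(\l)\to\{\Gamma\}'$ exists. Composing these four bounded maps produces the required projection; with both hypotheses of Lemma \ref{raeburn} in hand, the conclusion of the proposition follows at once.
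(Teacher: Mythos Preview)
Your proof is correct and follows the paper's strategy for openness and for complementing the kernel via $E_{(P_0,Q_0)}$. The divergence is in how you complement the \emph{range} of $d\pi_{(P_0,Q_0)}|_1$ inside $\b(\h_0)\times\b(\h_0)$.

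The paper avoids any explicit computation of the range. Instead it observes that the local cross-section ${\bf s}$ (which exists because $R(A^2-1)$ is closed) extends to a $C^\infty$ map ${\bf \check{s}}$ defined on an open subset of the ambient linear space $\b(\h_0)\times\b(\h_0)$, simply because the formula ${\bf s}(P',Q')=sgn(P_0+Q'-1)\,V$ makes sense for arbitrary $(T,S)$ with $P_0+S-1$ invertible. Differentiating the identity $\pi_{(P_0,Q_0)}\circ{\bf \check{s}}\circ\pi_{(P_0,Q_0)}=\pi_{(P_0,Q_0)}$ at $1$ immediately yields a bounded linear idempotent $\Pi\circ{\bf \check{S}}$ on $\b(\h_0)\times\b(\h_0)$ whose range is exactly $R(d\pi_{(P_0,Q_0)}|_1)$. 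This is a soft, coordinate-free trick that works whenever the cross-section extends smoothly to the ambient space.

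Your approach instead computes the range explicitly in Halmos coordinates and builds the projection by hand, the nontrivial ingredient being a conditional expectation $\b(\l)\to\{\Gamma\}'$. This is heavier machinery: you invoke that $\{\Gamma\}'$, being the commutant of an abelian von Neumann algebra, is type~I and hence injective. That is correct, and injectivity does supply a bounded (norm-one) conditional expectation, which is all you need for Banach-space complementation. One small caveat: you call this expectation \emph{normal}, but normality generally fails when $\{\Gamma\}''$ is non-atomic (e.g.\ if $\Gamma$ has continuous spectrum); fortunately normality is irrelevant here, so the argument survives. The paper's route is more elementary and self-contained; yours trades explicitness for an appeal to a structural theorem about von Neumann algebras.
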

\begin{proof}
We shall apply Lemma \ref{raeburn} above. Note that the condition that $\pi_{(P_0,Q_0)}$ is open is fullfilled: if $A^2-1$ has closed range,  then $\pi_{(P_0,Q_0)}$ has continuous local cross-sections. A cross section on a neighbourhood of $(P_0,Q_0)$ was defined in Section 2  by
$$
{\bf s}:\{(P',Q')\in\d_{A_0}: P_0+Q'-1\in Gl(\h_0)\}\to \u_{\a_0} \ , \ {\bf s}(P',Q')=(P_0+Q'-1)|P_0+Q'-1|^{-1}V.
$$
This map can be extended to a map ${\bf \check{s}}$ defined on an open subset in $\b(\h_0)\times\b(\h_0)$, with values in $Gl(\h_0)$. Namely
$$
{\bf \check{s}}:\{(T,S)\in \b(\h_0)\times\b(\h_0): P_0+S-1\in Gl(\h_0)\} \ , \  {\bf \check{s}}(T,S)=(P_0+S-1)|P_0+S-1|^{-1}V.
$$  
Clearly, $\{(T,S)\in \b(\h_0)\times\b(\h_0): P_0+S-1\in Gl(\h_0)\}$ is an open subset of $\b(\h_0)\times\b(\h_0)$ containing  $(P_0,Q_0)$, and ${\bf \check{s}}$ is $C^\infty$.

The differential $d(\pi_{(P_0,Q_0)})_1: (\a_0)_{ah} \to \b(\h_0)\times \b(\h_0)$ is given by
$$
d(\pi_{(P_0,Q_0)})_1(Z)=(ZP_0-P_0Z,ZQ_0-Q_0Z).
$$
Here, $(\a_0)_{ah}$ denotes the set of anti-Hermitian elements of $\a_0$ (which is the Banach-Lie algebra of $\u_{\a_0}$).
Clearly, this map has a natural extension 
$$
\Pi:\b(\h_0)\to\b(\h_0)\times\b(\h_0) \ , \ \Pi(X)=(XP_0-P_0X,XQ_0-Q_0X).
$$
Denote  ${\bf \check{S}}=d({\bf \check{s}})_{(P_0,Q_0)}$. The fact that ${\bf s}$ is a cross section for $\pi_{(P_0,Q_0)}$ implies that $\pi_{(P_0,Q_0)}\circ {\bf s} \circ \pi_{(P_0,Q_0)}=\pi_{(P_0,Q_0)}$. Equivalently,
$$
 \pi_{(P_0,Q_0)}\circ {\bf \check{s}} \circ \pi_{(P_0,Q_0)}=\pi_{(P_0,Q_0)}.
$$
This is a composition of $C^\infty$ maps defined on open subsets of Banach spaces. If we differentiate this identity at $1$,  we get
\begin{equation}\label{rango}
\Pi\circ {\bf \check{S}}\circ \Pi=\Pi.
\end{equation}
If we  restrict this identity to $(\a_0)_{ah}$, the image $\Pi((\a_0)_{ah})$ equals the image of $d(\pi_{(P_0,Q_0)})_1$. Then, identity  (\ref{rango}) above  implies that $\Pi\circ{\bf \check{S}}$ is an idempotent whose range equals  the range of $\Pi$.
It follows that the range of   $d(\pi_{(P_0,Q_0)})_1$ is complemented in $\b(\h_0)\times\b(\h_0)$.

The nullspace of  $d(\pi_{(P_0,Q_0)})_1$ is 
$$
\{Z\in(\a_0)_{ah}: ZP_0=P_0Z \ \hbox{ and } \ ZQ_0=Q_0Z\}.
$$
This is the Banach-Lie algebra of $\pi_{(P_0,Q_0)}^{-1}(P_0,Q_0)$ (usually called the isotropy subgroup of the action at $(P_0,Q_0)$).  It was described in Remark \ref{isotropia} using Halmos representation. It is clear, then, that the Banach Lie algebra of the isotropy group consists of  matrices
$$
\left( \begin{array}{cc} X & 0 \\ 0 & X \end{array} \right)
$$
with $X^*=-X$ and  $X\Gamma=\Gamma X$.

Let us prove that this space is complemented in the Banach-Lie algebra $(\a_0)_{ah}$, which (in this representation) consists of matrices of the form  
$$
\left(\begin{array}{cc} X & Y \\ Y & Z \end{array} \right) 
$$
 where $X,Y,Z$ commute with $\Gamma$, and satisfy the equation
 $$
 C(X-Z)+2SY=0.
 $$

As remarked above,  $E_{(P_0,Q_0)}$ is a conditional expectation  between the von Neumann algebras $\a_0$ and the isotropy subalgebra $N(d(\pi_{(P_0,Q_0)})_1)$ at $(P_0,Q_0)$. Therefore, the anti-Hermitian part of the nullspace $N(E_{(P_0,Q_0)})\cap (\a_0)_{ah}$ is a supplement for the isotropy subalgebra.
\end{proof}

In the general case, i.e., if $R(A^2-1)$ is not necessarily closed, we shall use the transitive action of $\u_{\a_0}$ to induce a differentiable structure in  $\d_{A_0}$  . Using Halmos representation, we know the explicit form of the isotropy subgroups of the action.  In order to prove that $\d_{A_0}$ has a $C^\infty$   structure, and that the maps $\pi_{(P_0,Q_0)}$ are submersions, we shall use a general result on quotients of unitary groups (see, for instance, \cite{beltita}).
In this result, it is required that $H$ is a {\it Banach-Lie subgroup} of $G$ in the following  specific sense: 
\begin{defi} \label{def67} {\rm (\cite{beltita} Definition 4.1)}
 Let $G$ be a Banach-Lie group and $H$ a subgroup of $G$.
We say that $H$ is a Banach-Lie subgroup of $G$ if the following conditions are
satisfied.
\begin{enumerate}
\item
 The subgroup $H$ is endowed with a structure of Banach-Lie group whose
underlying topology is the same as  the relative topology of $H$ in $G$.
\item
The inclusion map $H \hookrightarrow G$ is smooth and the induced mapp between the Banach-Lie algebras  $L(H) \to L(G)$ is an injective operator with closed range.
\item
There exists a closed linear subspace $M$ of $L(G)$ such that $L(H)\oplus M = L(G)$.
\end{enumerate}
\end{defi}

\begin{prop} {\rm(\cite{beltita} Theorem  4.19)}
Let $G$ be a Banach-Lie group, $H$ a Banach-Lie subgroup
of $G$ and $\pi : G \to G/H$ the natural projection. Endow $G/H$ with the quotient
topology and consider the natural transitive action
$$
G \times G/H \to G/H, (g, kH)  \mapsto gkH.
$$
Then $G/H$ has a structure of $C^\infty$ manifold and  the following
conditions are satisfied:
\begin{enumerate}
\item
The mapping $\pi$ is $C^\infty$ and has $C^\infty$ local cross sections
near every point of $G/H$.
\item
 For every $g \in G$ the mapping
$$
 G/H \to G/H \ , \ \ kH\mapsto gkH
$$
is $C^\infty$.
\end{enumerate}
\end{prop}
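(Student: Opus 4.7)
The plan is to build a smooth atlas on $G/H$ from a single chart at $eH$ transported by the $G$-action, the key ingredient being the topological splitting $L(G)=L(H)\oplus M$ supplied by condition (3) of the definition of Banach-Lie subgroup. Consider the smooth map
$$
\Phi\colon M\oplus L(H)\to G,\qquad \Phi(X,Y)=\exp_G(X)\exp_G(Y).
$$
The differential of $\Phi$ at $(0,0)$ is the identity of $L(G)=M\oplus L(H)$, so by the Banach space inverse function theorem, $\Phi$ restricts to a diffeomorphism from an open neighbourhood $U_M\times U_H$ of $(0,0)$ onto an open neighbourhood $V$ of $e$ in $G$. Shrinking $U_H$ if necessary, condition (2) of the definition (that the inclusion $L(H)\hookrightarrow L(G)$ is the derivative of $H\hookrightarrow G$) lets us assume $\exp_G(U_H)\subset H$, where it also serves as a chart for $H$ near $e$.

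Define the chart $\psi\colon U_M\to G/H$ by $\psi(X)=\exp_G(X)H$. Injectivity: if $\psi(X_1)=\psi(X_2)$ then $\exp_G(-X_2)\exp_G(X_1)\in H\cap V$, and by shrinking $U_M$ further, condition (1) of the definition (that $H$ carries the relative topology from $G$) implies $H\cap V\subset \exp_G(U_H)$ in the $\Phi$-decomposition, forcing $X_1=X_2$. The image is open in the quotient topology because $\pi^{-1}(\psi(U_M))=V\cdot H=\bigcup_{h\in H}Vh$ is an open union in $G$. Continuity of $\psi^{-1}$ follows because, on the open saturated set $V\cdot H$, the map $\psi^{-1}\circ \pi$ coincides with the projection $M\oplus L(H)\to M$ composed with $\Phi^{-1}$.

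To produce charts elsewhere, for $g_0\in G$ set $\psi_{g_0}(X)=g_0\exp_G(X)H$ on $U_M$; the transition $\psi_{g_1}^{-1}\circ \psi_{g_0}$ is smooth where defined, being the composition of left multiplication by $g_1^{-1}g_0$ in $G$ with $\Phi^{-1}$ and the projection onto $M$. This gives the $C^\infty$ structure on $G/H$ and shows that $\pi$ is a smooth submersion with local cross section
$$
\sigma_{g_0}\colon \psi_{g_0}(U_M)\to G,\qquad \sigma_{g_0}(g_0\exp_G(X)H)=g_0\exp_G(X).
$$
Smoothness of the action $(g,kH)\mapsto gkH$ is then immediate: in charts, it factors through multiplication in $G$ followed by $\Phi^{-1}$ and the projection onto $M$, all smooth operations. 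The smoothness of $kH\mapsto gkH$ for fixed $g$ is a special case.

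The main obstacle is the interplay between the intrinsic and relative structures of $H$ in $G$: one must ensure that, with $V$ taken small enough, $H\cap V$ lies entirely inside $\exp_G(U_H)$ in the $\Phi$-decomposition. Without this control, injectivity of $\psi$ fails and there is no well-defined chart. This is precisely where all three conditions of the Banach-Lie subgroup definition enter in a coordinated way: (1) identifies the topology of $H$ with the relative topology from $G$, so that a $G$-neighbourhood of $e$ can be shrunk to isolate $H$; (2) secures that the Lie-algebra embedding is injective with closed range and that $\exp_G|_{L(H)}$ parametrizes $H$ near $e$; (3) supplies the complement $M$ from which the transverse chart coordinates are taken. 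Removing any of these collapses the standard construction.
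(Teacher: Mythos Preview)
The paper does not prove this proposition; it is quoted verbatim as Theorem~4.19 of \cite{beltita} and used as a black box to endow $\d_{A_0}$ with its quotient manifold structure. There is therefore no ``paper's own proof'' to compare against.

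Your sketch is the standard construction and is essentially correct. One point deserves a bit more care: after shrinking to obtain $H\cap V\subset \exp_G(U_H)$ you must still have $V$ of the product form $\Phi(U_M'\times U_H')$, since the injectivity argument for $\psi$ uses the $\Phi$-decomposition inside $V$. This is routine (intersect with a smaller product neighbourhood and iterate), but as written the two shrinkings are performed independently. Also, the phrase ``$H\cap V\subset \exp_G(U_H)$ in the $\Phi$-decomposition'' is slightly ambiguous; what you need is that every element of $H\cap V$, when written uniquely as $\exp_G(X)\exp_G(Y)$ via $\Phi^{-1}$, has $X=0$. That follows once $H\cap V\subset\exp_G(U_H)$ and $\Phi$ is a diffeomorphism on $U_M\times U_H$, since $\exp_G(Y)=\Phi(0,Y)$.
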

The isotropy subgroup $\ii_{(P_0,Q_0)}:=\{W\in\u_{\a_0}: (WP_0W^*,WQ_0W^*)=(P_0,Q_0)\}\subset \u_{\a_0}$ clearly satisfies the conditions  of Definition \ref{def67} (the space  $M:=\mathbb{H}_{(P_0,Q_0)}$ given in  (\ref{reductivo}), satisfies condition 3.).
\begin{coro}
If $A\in\d$, the space $\d_{A_0}$ inherits a C$^\infty$ manifold structure from the quotient $\u_{\a_0} / \ii_{(P_0,Q_0)}$, which makes $\pi_{(P_0,Q_0)}$ a $C^\infty$ submersion.
\end{coro}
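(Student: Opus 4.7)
The plan is to apply the quoted Beltita proposition with $G=\u_{\a_0}$ and $H=\ii_{(P_0,Q_0)}$ to obtain a $C^\infty$ structure on the quotient $\u_{\a_0}/\ii_{(P_0,Q_0)}$, and then transport that structure to $\d_{A_0}$ along the canonical bijection induced by $\pi_{(P_0,Q_0)}$. The bijection is supplied by Theorem \ref{transitiva}: transitivity yields surjectivity of $\pi_{(P_0,Q_0)}$, and by the very definition of $\ii_{(P_0,Q_0)}$ the fibre of $\pi_{(P_0,Q_0)}$ over a point $(UP_0U^*, UQ_0U^*)$ is exactly the coset $U\cdot\ii_{(P_0,Q_0)}$. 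Hence $\pi_{(P_0,Q_0)}$ factors as $\overline{\pi}\circ q$, where $q\colon\u_{\a_0}\to\u_{\a_0}/\ii_{(P_0,Q_0)}$ is the canonical projection and $\overline{\pi}$ is a set-theoretic bijection onto $\d_{A_0}$; declaring $\overline{\pi}$ a diffeomorphism transports the smooth structure, and then the smoothness and submersion property of $\pi_{(P_0,Q_0)}$ follow at once from those of $q$ asserted by the quoted result.

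It therefore suffices to verify the three items of Definition \ref{def67}. By Remark \ref{isotropia}, $\ii_{(P_0,Q_0)}$ is naturally identified with the unitary group of the commutant $\{\Gamma\}'\subset\b(\l)$ via the block-diagonal embedding $W\mapsto W\oplus W$; since this commutant is a von Neumann algebra, its unitary group is a Banach--Lie group, whose underlying topology manifestly coincides with the operator norm topology inherited from $\u_{\a_0}$, settling item 1. For item 2, the associated Banach--Lie algebra embeds into $(\a_0)_{ah}$ as the closed subspace of block-diagonal anti-Hermitian matrices $X\oplus X$ with $X\Gamma=\Gamma X$, and the inclusion is a bounded injection with closed range.

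The main obstacle is item 3: exhibiting a closed complement of $L(\ii_{(P_0,Q_0)})$ inside $(\a_0)_{ah}$. The natural candidate is $\mathbb{H}_{(P_0,Q_0)}$ of Definition \ref{el horizontal}. Given an arbitrary element
\[
M=\left(\begin{array}{cc} X' & Y \\ Y & Z' \end{array}\right)\in (\a_0)_{ah},
\]
the constraint $C(X'-Z')+2SY=0$ from the description of $\a_0$ in Section 4 forces $Z'-X'=2\tau Y$ (and in particular makes $\tau Y$ bounded), so with $E:=\frac12(X'+Z')$ we have the decomposition
\[
M=\left(\begin{array}{cc} E & 0 \\ 0 & E \end{array}\right)+\left(\begin{array}{cc} -\tau Y & Y \\ Y & \tau Y \end{array}\right),
\]
the first summand belonging to $L(\ii_{(P_0,Q_0)})$ and the second to $\mathbb{H}_{(P_0,Q_0)}$. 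This splitting is precisely $M=E_{(P_0,Q_0)}(M)+(M-E_{(P_0,Q_0)}(M))$, and both maps are norm-continuous because $E_{(P_0,Q_0)}$ is a conditional expectation; hence $\mathbb{H}_{(P_0,Q_0)}$ is a closed topological complement of $L(\ii_{(P_0,Q_0)})$, completing the verification. Once this complementedness is in hand, everything else is an immediate invocation of the quoted proposition together with the transport along $\overline{\pi}$.
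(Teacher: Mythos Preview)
Your proof is correct and follows essentially the same approach as the paper: verify that $\ii_{(P_0,Q_0)}$ satisfies the three conditions of Definition \ref{def67} (with $\mathbb{H}_{(P_0,Q_0)}$ supplying the complement for condition 3) and then invoke Beltita's quotient theorem, transporting the resulting structure to $\d_{A_0}$ along the bijection induced by transitivity. The paper's own argument is considerably terser---it simply asserts that the conditions hold and points to $\mathbb{H}_{(P_0,Q_0)}$ for condition 3---so your version in fact fills in the details (the identification of $\ii_{(P_0,Q_0)}$ with the unitary group of $\{\Gamma\}'$, the explicit splitting $M=E_{(P_0,Q_0)}(M)+(M-E_{(P_0,Q_0)}(M))$, and the transport along $\overline{\pi}$) that the paper leaves implicit.
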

\begin{rem}
The topology that the quotient $\u_{\a_0} / \ii_{(P_0,Q_0)}$ induces in $\d_{A_0}$ might be different from the ambient topology induced by $\b(\h_0)\times \b(\h_0)$. In other words, the identification is not necessarily a homeomorphism between these topologies.
\end{rem}
\section{A reductive structure for $\d_{A_0}$}
Recall the supplement $\mathbb{H}_{(P_0,Q_0)}$ of $L(\ii_{(P_0,Q_0)})=N(E_{(P_0,Q_0)})\cap (\a_0)_{ah}$  in $(\a_0)_{ah}$, defined in \ref{el horizontal},
$$
\mathbb{H}_{(P_0,Q_0)}=\{ \left(\begin{array}{cc} -Y\tau & Y \\ Y & Y\tau\end{array}\right) : Y^*=-Y, Y\tau \hbox{ bounded}\}.
$$
This distribution of subspaces $\d_{A_0}\ni (P_0,Q_0)\mapsto \mathbb{H}_{(P_0,Q_0)}$   is what in differential geometry 
 is  called a {\it reductive structure} for $\d_{A_0}$, meaning that it satisfies the following conditions:
\begin{itemize}
\item
The subspace $\hh$ is invariant under the inner action of $\ii_{(P_0,Q_0)}$: if $W\in\ii_{(P_0,Q_0)}$ and  $Z\in\hh$
then $W\cdot Z\in\hh$.
\item
The  distribution of supplements $\d_{A_0}\ni (P',Q')\mapsto \mathbb{H}_{(P',Q')}$ is $C^\infty$.
This means that if ${\bf P}_{ \mathbb{H}_{(P',Q')}}$ denotes the idempotent (real) linear map acting in $(\a_0)_{ah}$ corresponding to the projection to the first component in the decomposition $(\a_0)_{ah}= \mathbb{H}_{(P',Q')}\oplus L(\ii_{(P',Q')})$, then the map
$$
\d_{A_0}\ni (P',Q')\mapsto {\bf P}_{ \mathbb{H}_{(P',Q')}}\in \b((\a_0)_{ah})
$$
is $C^\infty$.
\end{itemize}
 In our case, the fact that the supplement is the (anti-Hermitian part) of the nullspace of an $L(\ii_{(P_0,Q_0)})$-valued conditional expectation, implies the first property. Also note that  ${\bf P}_{ \mathbb{H}_{(P',Q')}}=\left(Id-E_{(P_0,Q_0)}\right)|_{ \b((\a_0)_{ah})}$.

Each pair $(P',Q')$ gives rise to a conditional expectation, which due to Remark \ref{independencia de Halmos}, depends only on the pair. We must show that  if the pairs $(P',Q')$ vary smoothly, then so do the maps $E_{(P',Q')}$.  The map  $B\mapsto V$ , via the polar decomposition $B=V|B|$,  in general is not continuous, much less smooth. However, in the unitary orbit of $(P_0,Q_0)$, it is smooth. Indeed, note that, locally (for $(P',Q')$ close to $(P_0,Q_0)$), the unitary $U$ in $\a_0$ such that $(UP_0U^*,UQ_0U^*)=(P',Q')$  can be chosen as a smooth map in the arguments $P',Q'$, by means of smooth local cross section for the submersion $\pi_{(P_0,Q_0)}$. Then, if we denote (as in Remark \ref{independencia de Halmos}), $B=Q_0-P_0Q_0P_0+P_0^\perp Q_0 P_0^\perp$, and, accordingly, $B'=Q'-P'Q'P'-P'^\perp Q' P'^\perp$, and $V$ and $V'$ are the isometric parts in the corresponding polar decompositions of $B$ and $B'$, then
$$
UVU^*=V'.
$$
Therefore, $E_{(P',Q')}=U E_{(P_0,Q_0)}(U^*\cdot U) U^*$.

\begin{rem}
Elements $Z\in\hh$ have  symmetric spectrum, with symmetric multiplicity. Indeed, consider 
$$
J=\left(\begin{array}{cc} 0 & 1 \\ -1 & 0 \end{array}\right).
$$
Then $J^*=-J$, $J^2=-1$ (a fortiori, $J$ is a unitary in $\h_0$), and for any $Z\in\hh$,
$$
J\left(\begin{array}{cc} -Y\tau & Y \\ Y & Y\tau \end{array} \right)=-\left(\begin{array}{cc} -Y\tau & Y \\ Y & Y\tau \end{array} \right)J,
$$
i.e., $JZJ^*=-ZJJ^*=-Z$. 

Therefore, if one considers the symmetry $V=iJ$, it turns out that elements in $\mathbb{H}_{(P_0,Q_0)}$ are of the form $Z=r(E-F)$,  for $r=\|Z\|$ and  $E,F$ orthogonal projections.  
\end{rem}

\begin{rem}
A reductive structure on a homogeneous space induces a linear connection (see \cite{kn}, or \cite{m-l r} for an infinite dimensional setting). For instance, the geodesics can be explicitly computed (in terms of $YC^{-1}$). Indeed,  note that,  if $Z=\left( \begin{array}{cc} -Y\tau & Y \\ Y & Y\tau \end{array} \right)$, then
$$
Z=Z_0+Z_1 ,  \ \hbox{ where } Z_0=\left( \begin{array}{cc} -Y\tau & 0 \\ 0 & Y\tau \end{array} \right) \ \hbox{ and } \ Z_1=\left( \begin{array}{cc} 0 & Y \\ Y & 0 \end{array} \right),
$$ 
and $Z_0, Z_1$ anti-commute. Thus, 
$$
Z^2=Z_0^2+Z_1^2=\left( \begin{array}{cc} Y^2\tau^2+Y^2 & 0 \\ 0 & Y^2\tau^2+Y^2 \end{array} \right)=(YC^{-1})^2\left( \begin{array}{cc} 1 & 0 \\ 0 & 1  \end{array} \right),
$$
and $Z^{2n}=(YC^{-1})^{2n}\left( \begin{array}{cc} 1 & 0 \\ 0 & 1 \end{array} \right)$.
Also 
$$
Z^{2n+1}=Z^{2n}Z=\left( \begin{array}{cc} -(YC^{-1})^{2n}Y\tau & (YC^{-1})^{2n}Y \\ (YC^{-1})^{2n}Y & (YC^{-1})^{2n}Y\tau \end{array} \right)=\left( \begin{array}{cc} -(YC^{-1})^{2n+1}S & (YC^{-1})^{2n+1}YC \\ (YC^{-1})^{2n+1}YC & (YC^{-1})^{2n+1}YS \end{array} \right)
$$
$$
=(YC^{-1})^{2n+1} \left( \begin{array}{cc} -S & C \\ C & S \end{array} \right).
$$
Then, 
\begin{equation}\label{etZ1}
e^{tZ}=\cosh(tYC^{-1}) \left( \begin{array}{cc} 1 & 0 \\ 0 & 1 \end{array} \right)+\sinh(tYC^{-1}) \left( \begin{array}{cc} -S & C \\ C & S \end{array} \right).
\end{equation}
The operator $\Sigma=\left( \begin{array}{cc} -S & C \\ C & S \end{array} \right)$ is a symmetry: $\Sigma^*=\Sigma=\Sigma^{-1}$.

Note that, since $YC^{-1}$ is anti-Hermitian,  and $\cosh$ and $\sinh$ are, respectively, even and odd functions, then
 the first term of  $e^{tZ}$ is selfadjoint and the second is anti-Hermitian. Then,
$$
e^{-tZ}=\cosh(tYC^{-1}) \left( \begin{array}{cc} 1 & 0 \\ 0 & 1 \end{array} \right)-\sinh(tYC^{-1}) \left( \begin{array}{cc} -S & C \\ C & S \end{array} \right).
$$
Therefore, the geodesic $\delta(t)=(e^{tZ}P_0e^{-tZ}, e^{tZ}Q_0e^{-tZ})$ can be explicitely computed.

Alternatively, let $D^*=D$ such that $YC^{-1}=iD$. Then, $\cosh(tYC^{-1})=\cos(tD)$ and $\sinh(tYC^{-1})=i \sin(tD)$.
\end{rem}
\begin{rem}
The geodesics can be described in an intrinsic way, without reference to the Halmos frame of reference. To this effect, note that the matrix $\left( \begin{array}{cc} -S & C \\ C & S \end{array} \right)$ above is precisely $-J_0$, ($J_0=sgn(A_0)$). Indeed:  
$$
A_0=\left( \begin{array}{cc} S^2 & -CS \\ -CS & -S^2 \end{array} \right) \ , \ 
A_0^2=\left( \begin{array}{cc} S^2 & 0 \\ 0  & S^2 \end{array} \right) \ \hbox{ and  } |A_0|=\left( \begin{array}{cc} S & 0 \\ 0  & S \end{array} \right).
$$
Next, note that
$$
P_0ZP_0J_0P_0=\left( \begin{array}{cc} -Y\tau & 0 \\ 0  & 0 \end{array} \right)\left( \begin{array}{cc} -S & 0 \\ 0  & 0 \end{array} \right)=\left( \begin{array}{cc} YC^{-1} & 0 \\ 0  & 0 \end{array} \right).
$$
Then
\begin{equation} \label{etZ2}
e^{tZ}=\cosh(t P_0ZP_0J_0P_0)+\sinh(t P_0ZP_0J_0) J_0,
\end{equation}
and similarly for $e^{-tZ}$.
\end{rem}

With these expressions above, the bijectivity radius of the exponential map can be computed.
\begin{teo}\label{radio geodesico}
Let $(P_0,Q_0)\in\d_{A_0}$. Then the exponential map
$$
exp_{(P_0,Q_0)}:\{Z\in\mathbb{H}_{(P_0,Q_0)}: \|Z\|<\pi/2\} \to exp_{(P_0,Q_0)}\left(\{Z\in\mathbb{H}_{(P_0,Q_0)}: \|Z\|<\pi/2\}\right)
$$ 
is a bijection, whose image
$$
exp_{(P_0,Q_0)}\left(\{Z\in\mathbb{H}_{(P_0,Q_0)}: \|Z\|<\pi/2\}\right)\subset \d_{A_0}
$$
is an open dense subset.
\end{teo}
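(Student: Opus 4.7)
My plan has four parts: (1) derive an explicit formula for $e^{tZ}$ exploiting Remark \ref{CONMUTAN}, (2) use it to obtain injectivity, (3) construct an explicit inverse on an open set $\Omega$, and (4) prove $\Omega$ is dense.

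\textbf{Step 1: Explicit formula.} By Remark \ref{CONMUTAN}, all Halmos matrix entries of any $Z\in\hh$ commute, so a direct calculation yields $Z^2=-\Lambda^2\,I_{\h_0}$ where $\Lambda:=\bigl((-Y^2)\sec^2\Gamma\bigr)^{1/2}$ is a bounded positive operator on $\l$ commuting with $\Gamma$, and $\|\Lambda\|=\|Z\|$. The second-order ODE $f''+\Lambda^2 f=0$ with $f(0)=I$, $f'(0)=Z$ then integrates to
$$
e^{tZ}=\cos(t\Lambda)\,I_{\h_0}+t\,\mathrm{sinc}(t\Lambda)\,Z.
$$
A short matrix check using $\tau C=S$ verifies $ZV_0+V_0Z=0$ for every $Z\in\hh$; in fact $\hh$ equals the set of anti-Hermitian elements of $\a_0$ anti-commuting with $V_0$. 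Consequently $e^ZV_0=V_0e^{-Z}$, and since conjugation by a unitary in $\a_0$ acts covariantly on $\mathrm{sgn}(P+Q-1)$, the Davis symmetry of $\exp_{(P_0,Q_0)}(Z)$ is $V(Z):=e^ZV_0e^{-Z}=V_0e^{-2Z}$.

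\textbf{Step 2: Injectivity.} If $\exp(Z_1)=\exp(Z_2)$ with $\|Z_i\|<\pi/2$, then $V(Z_1)=V(Z_2)$, so $e^{2W}=1$ for $W:=Z_2-Z_1\in\hh$ with $\|W\|<\pi$. Writing out both diagonal Halmos entries of $e^{2W}-I=0$ via Step 1 and adding them produces $\cos(2\Lambda_W)=I$. Since $\Lambda_W\ge 0$ and $\|2\Lambda_W\|<2\pi$, this forces $\sigma(2\Lambda_W)\subset\{0\}$, so $\Lambda_W=0$ and thus $W^2=0$; the anti-Hermitian $W$ must then vanish.

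\textbf{Step 3: Explicit inverse and openness.} Set $\Omega:=\{(P',Q')\in\d_{A_0}:-1\notin\sigma(V_0V')\}$. For $(P',Q')\in\Omega$, the unitary $T:=V_0V'\in\a_0$ commutes with $J_0$ and satisfies $V_0TV_0=T^*$ (since $V_0$ and $V'$ are involutions). The principal branch $\log T$ is then well-defined, anti-Hermitian in $\a_0$, and of norm $<\pi$; functional calculus transports $V_0TV_0=T^*$ to $V_0(\log T)V_0=-\log T$. So $Z:=-\tfrac12\log T$ is anti-Hermitian, anti-commutes with $V_0$, and has $\|Z\|<\pi/2$; by the characterization of $\hh$ in Step 1, $Z\in\hh$. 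By construction $V_0e^{-2Z}=T=V_0V'$, so $\exp(Z)=(P',Q')$. The assignment $(P',Q')\mapsto -\tfrac12\log(V_0V')$ is a continuous two-sided inverse, so $\exp|_{\{\|Z\|<\pi/2\}}$ is a homeomorphism onto $\Omega$, and $\Omega$ is open in $\d_{A_0}$ by upper-semi-continuity of the spectrum.

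\textbf{Step 4: Density (the main obstacle).} It remains to show $\Omega^c=\{(P',Q'):-1\in\sigma(V_0V')\}$ has empty interior. I plan to perturb a given $(P',Q')\in\Omega^c$ using the $\u_{\a_0}$-action: for $K\in\hh$, $(P'_s,Q'_s):=e^{sK}(P',Q')e^{-sK}$ has Davis symmetry $V'_s=e^{sK}V'e^{-sK}$, and because $K$ anti-commutes with $V_0$,
$$
V_0V'_s \;=\; e^{-sK}\,(V_0V')\,e^{-sK}.
$$
This is a two-sided $K$-translation rather than a conjugation, so the spectrum genuinely varies with $s$. A first-order analysis of the eigenvalue $-1$ shows that it moves tangentially along $\mathbb{T}$ provided one can find $K\in\hh$ and a $-1$-eigenvector $\xi$ of $V_0V'$ with $\langle\xi,K\xi\rangle\ne 0$. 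The delicate point --- which I expect will occupy the bulk of the work --- is producing such a $K$: vectors lying in a single eigenspace of the restricted symmetry $V_0|_{R(P_{-1})}$ automatically give $\langle\xi,K\xi\rangle=0$, so one must exploit the coupling between the two eigenspaces of $V_0|_{R(P_{-1})}$ (both of which are forced to be non-trivial by $V_0A_0=-A_0V_0$ together with $P_{-1}\ne 0$) using the full off-diagonal flexibility of $\hh$ as encoded in the Halmos decomposition.
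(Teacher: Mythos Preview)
Your Steps 1 and 3 are essentially the paper's argument. The paper also identifies $\hh$ with the anti-Hermitian elements of $\a_0$ anti-commuting with $V_0$, and for the inverse it invokes the Porta--Recht result \cite{pr}: any symmetry $V$ with $\|V-V_0\|<2$ equals $e^ZV_0e^{-Z}$ for some anti-Hermitian $Z$ anti-commuting with $V_0$, $\|Z\|<\pi/2$, obtained as $Z=\tfrac12\log(VV_0)$. This is your $-\tfrac12\log(V_0V')$, and the condition $\|V'-V_0\|<2$ is exactly your $-1\notin\sigma(V_0V')$. (Small slip in Step 3: you write $V_0e^{-2Z}=T$; it should read $e^{-2Z}=T$, whence $V_0e^{-2Z}=V_0T=V'$.)

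\textbf{Step 2 has a genuine gap.} From $V_0e^{-2Z_1}=V_0e^{-2Z_2}$ you get $e^{-2Z_1}=e^{-2Z_2}$, but this does \emph{not} yield $e^{2(Z_2-Z_1)}=1$: two elements of $\hh$ need not commute (their $Y$-entries both commute with $\Gamma$ but not necessarily with each other), so your $\Lambda_W$-argument is not justified. The repair is simpler than what you wrote: since $\|2Z_j\|<\pi$, both $e^{2Z_j}$ lie where the principal logarithm is defined and injective, so $2Z_1=\log e^{2Z_1}=\log e^{2Z_2}=2Z_2$ directly. The paper's own argument is different: it writes $e^{-Z_2}e^{Z_1}=\mathrm{diag}(W,W)$ (an isotropy element), reads off $\cos(D_2)W=\cos(D_1)$ and $\sin(D_2)W=\sin(D_1)$ from the explicit formula for $e^{Z_j}$, and uses invertibility of $\cos(D_j)$ (from $\|D_j\|<\pi/2$) to conclude $\tan D_1=\tan D_2$, hence $D_1=D_2$.

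\textbf{Step 4 is badly over-engineered.} The paper dispenses with density in one line, simply asserting that $\{V:\|V-V_0\|<2\}$ is open and dense among Davis symmetries. If you want to make this fully rigorous within the constrained space $\d_{A_0}$, the clean route is \emph{not} first-order spectral perturbation of $V_0V'$ but rather the surjectivity statement proved immediately afterward (Theorem \ref{expo onto}): once $\exp_{(P_0,Q_0)}$ is known to map the \emph{closed} ball $\{\|Z\|\le\pi/2\}$ onto all of $\d_{A_0}$, density of the image of the open ball is automatic, since any $(P',Q')=\exp(Z)$ with $\|Z\|=\pi/2$ is the limit of $\exp(tZ)\in\Omega$ as $t\uparrow 1$. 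Your eigenvalue-tracking plan, and in particular the case analysis you anticipate for producing a $K\in\hh$ with $\langle\xi,K\xi\rangle\ne0$, is effectively a local reconstruction of the dimension-matching argument ($\dim\h_{+,-}=\dim\h_{-,+}$, forced here by the symmetry $J_0$) that drives Theorem \ref{expo onto}; you should just prove that surjectivity once and harvest density from it.
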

\begin{proof}
 Let $v_1,v_2\in(T\d_{A_0})_{(P_0,Q_0)}$ with the same exponential, i.e.,   if  $Z_1,Z_2\in\hh$ are the corresponding horizontal elements, $e^{Z_1}\cdot (P_0,Q_0)=e^{Z_2}\cdot (P_0,Q_0)$. Then, following the  notations of the preceeding remark ($Z_j=\left(\begin{array}{cc} -Y_j\tau & Y_j \\ Y_j & Y_j\tau \end{array} \right)$ and $D_j=-iY_j\tau$, for $j=1,2$), there exists a unitary operator in $\l$ such that 
$$
e^{-Z_2}e^{Z_1}=\left( \begin{array}{cc} W & 0 \\ 0 & W \end{array} \right),
$$
i.e., $\sin(D_2)W=\sin(D_1)$ and $\cos(D_2)W=\cos(D_1)$. If we suppose that $|v_j|_{(P_0,Q_0)}=\|D_j\|<\pi/2$, then the cosines are invertible, and, then, these identities  imply that  $D_1=D_2$. 

Let us prove that  $exp_{(P_0,Q_0)}\left(\{Z\in\mathbb{H}_{(P_0,Q_0)}: \|Z\|<\pi/2\}\right)$ is an open dense subset of $\d_{A_0}$. In order to prove this fact, we shall use the alternative characterization for $\d_{A_0}$ given in Theorem \ref{davis}. Namely, representing the elements of $\d_{A_0}$ as symmetries $V$ which anti-commute with $A_0$ . Recall that the correspondence is given by $(P,Q)\leftrightarrow V$, where $V$ is the symmetry (the isometric part) in the polar decomposition of $P+Q-1$.   Let us compute $V_0$, the symmetry corresponding to the pair $(P_0,Q_0)$,  using Halmos decomposition based on the pair $(P_0,Q_0)$. Note that
$$
P_0+Q_0-1=\left( \begin{array}{cc} C^2 & CS \\ CS & -C^2 \end{array}\right) \  , \ \  (P_0+Q_0-1)^2= \left( \begin{array}{cc} C^2 & 0\\ 0 & C^2 \end{array}\right),
$$
so that 
$$
|P_0+Q_0-1|=\left( \begin{array}{cc} C & 0 \\ 0 & C \end{array}\right) \ \hbox{ and } \ \ V_0=\left( \begin{array}{cc} C & S \\ S & -C \end{array}\right) .
$$
Pick $Z\in\mathbb{H}_{(P_0,Q_0)}$, $Z=\left( \begin{array}{cc} -Y\tau & Y \\ Y & Y\tau \end{array}\right)$.
Recall that $Y$ and $Y\tau$ commute with $C$ and $S$, so we get
$$
ZV_0=\left( \begin{array}{cc} -Y\tau & Y \\ Y & Y\tau \end{array}\right)\left( \begin{array}{cc} C & S \\ S & -C \end{array}\right)=\left( \begin{array}{cc} 0 & -Y\tau S-YC \\ YC+Y\tau S & 0 \end{array}\right)
$$
and
$$
V_0Z=\left( \begin{array}{cc} 0 & Y\tau S+YC \\ -YC-Y\tau S & 0 \end{array}\right).
$$
That is, $ZV_0=-V_0Z$. Conversely, a similar computation shows that anti-Hermitian elements $Z$ which anti-commute with $V_0$ belong to $\mathbb{H}_{(P_0,Q_0)}$. Pick a symmetry $V$ which anti-commutes with $A_0$, such that $\|V-V_0\|<2$. In general, any pair of symmetries lie at distance less or equal than $2$. Therefore, the set of such $V$ form an open dense subset of $\d_{A_0}$. We claim that these elements $V$ belong to the image of the exponential $exp_{(P_0,Q_0)}$ restricted to $\{Z\in\mathbb{H}_{(P_0,Q_0)}: \|Z\|<\pi/2\}$. In \cite{pr}, H. Porta and L.Recht proved that a symmetry $V$ such that $\|V-V_0\|<2$ is of the form $V=e^{Z}V_0e^{-Z}$, where $Z$ is an anti-Hermitian operator which anti-commutes with $V_0$, i.e.,  $Z\in \mathbb{H}_{P_0,Q_0}$, with $\|Z\|<\pi/2$. 
Also note that,  since $Z$ anti-commutes with $V_0$, $V_0e^{-Z}=e^{Z}V_0$, thus,
$$
V=e^{2Z}V_0 \ \hbox{ and  } \ e^{2Z}=VV_0.
$$
Since $\|2Z\|<\pi$, $Z$ can be obtained as $Z=\frac12 \log(VV_0)$ ($\log$ the unique logarithm for unitary operators $U$ such that $\|U-1\|<2$; notice that $\|VV_0-1\|=\|V-V_0\|<2$). Since $V$ and $V_0$ anti-commute with $A_0$, $VV_0$ and its logarithm  $Z$ belong to $\a_0$. 

The correspondence between symmetries and pairs in $\d_{A_0}$ is clearly equivariant: $UV_0U^*\leftrightarrow U\cdot (P_0,Q_0)$, for $U\in \u_{\a_0}$. Therefore, $(P,Q)=exp_{(P_0,Q_0)}(Z)$, where $Z\in \{Z\in\mathbb{H}_{(P_0,Q_0)}: \|Z\|<\pi/2\}$.  
\end{proof}

A similar argument (representing $\d_{A_0}$ as Davis' symmetries) allows one to prove that \\ $exp_{(P_0,Q_0)}$ is globally onto. The proof is also a refinement of the argument which showed that the action of $\u_{\a_0}$ on $\d_{A_0}$ is transitive. In fact, the result below implies the former.

\begin{teo}\label{expo onto}
Let $(P_0,Q_0)\in\d_{A_0}$. The exponential map
$$
exp_{(P_0,Q_0)}:\{Z\in\mathbb{H}_{(P_0,Q_0)}: \|Z\|\le \pi/2\}\to \d_{A_0}
$$
is onto.
\end{teo}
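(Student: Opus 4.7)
The plan is to represent $\d_{A_0}$ by Davis symmetries (Theorem~\ref{davis}) and reduce the statement to: for every symmetry $V$ anti-commuting with $A_0$, produce $Z\in\mathbb{H}_{(P_0,Q_0)}$ with $\|Z\|\le\pi/2$ and $e^{2Z}V_0=V$, where $V_0$ is the Davis symmetry of $(P_0,Q_0)$. The equivariant correspondence $V\leftrightarrow(P,Q)$ together with the identity $\exp_{(P_0,Q_0)}(Z)\leftrightarrow e^{2Z}V_0$, implicit in the proof of Theorem~\ref{radio geodesico}, then finishes. Setting $W:=VV_0\in\a_0$, which is unitary and satisfies $V_0WV_0=W^{-1}$, the task becomes: solve $e^{2Z}=W$ with $Z$ anti-Hermitian in $\a_0$ and anti-commuting with $V_0$.

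The case $-1\notin\sigma(W)$ (equivalently $\|V-V_0\|<2$) is handled by the argument of Theorem~\ref{radio geodesico}. To cover the boundary I would split $\h_0=\n\oplus\n^\perp$ with $\n:=N(W+1)=N(V+V_0)$, and check (using $A_0V=-VA_0$ and $A_0V_0=-V_0A_0$) that $\n$ reduces $V$, $V_0$ and $A_0$. The spectral projection of $W$ onto $\{-1\}$ coincides with $P_\n$, so on $\n^\perp$ the spectral measure of $W$ has no atom at $-1$. Via Borel functional calculus I would set $Z_1:=\psi(W|_{\n^\perp})$ with $\psi(e^{i\theta})=i\theta/2$ for $\theta\in(-\pi,\pi)$; this gives a bounded anti-Hermitian operator in $\a_0|_{\n^\perp}$ with $\|Z_1\|\le\pi/2$ and $e^{2Z_1}=W|_{\n^\perp}$. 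The identities $V_0WV_0=W^{-1}$ and $\psi(\lambda^{-1})=-\psi(\lambda)$ for $\lambda\ne-1$ yield $V_0Z_1V_0=-Z_1$, so $Z_1\in\mathbb{H}_{(P_0,Q_0)}|_{\n^\perp}$.

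On $\n$, where $V=-V_0$, I would build $Z_2$ with $e^{2Z_2}=-1_\n$ directly. Decompose $\n=E_+\oplus E_-$ into the $\pm 1$ eigenspaces of $V_0|_\n$; the relation $A_0V_0=-V_0A_0$ combined with $N(A_0)=0$ forces $A_0|_\n$ to be off-diagonal, with $E_+\to E_-$ block equal to an operator $A$ that is injective with dense range. Its polar decomposition $A=U|A|$ provides a unitary $U:E_+\to E_-$. Let $J:\n\to\n$ be the operator sending $\xi_+\in E_+$ to $iU\xi_+\in E_-$ and $\xi_-\in E_-$ to $iU^*\xi_-\in E_+$, and set $Z_2:=\frac{\pi}{2}J$. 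A short calculation using $U^*A=|A|=A^*U$ verifies $J^*=-J$, $J^2=-1$, $JV_0=-V_0J$, and $JA_0=A_0J$; hence $Z_2\in\mathbb{H}_{(P_0,Q_0)}|_\n$, $\|Z_2\|=\pi/2$, and $e^{2Z_2}=\cos(\pi)+\sin(\pi)J=-1_\n$. Assembling $Z:=Z_1\oplus Z_2$ gives an element of $\mathbb{H}_{(P_0,Q_0)}$ with $\|Z\|\le\pi/2$ and $e^{2Z}V_0=V$, as required.

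The main obstacle is the subcase in which $-1$ belongs to the \emph{continuous} spectrum of $W$: then $\n$ may be trivial while $\|V-V_0\|=2$ still holds, no continuous branch of the logarithm exists on $\sigma(W)$, and the method of Theorem~\ref{radio geodesico} breaks down. The Borel functional calculus bypasses this because, once the eigenspace $\n$ is peeled off, the spectral measure of $W|_{\n^\perp}$ assigns no mass to $\{-1\}$, rendering the (discontinuous) choice of $\psi(-1)$ immaterial.
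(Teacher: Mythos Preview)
Your proof is correct and shares the paper's overall strategy: pass to Davis symmetries, isolate $\n=N(V+V_0)$, and on $\n$ take $Z_2=\frac{\pi}{2}J$. Your $J$ coincides with $iJ_0|_\n$, where $J_0=sgn(A_0)$: the unitary part in the polar decomposition of the off-diagonal matrix $A_0|_\n$ is precisely the off-diagonal matrix with blocks $U,U^*$, so $iJ_0|_\n=J$; the paper makes exactly this choice, writing $Z|_{\h_0''}=i\frac{\pi}{2}J_0|_{\h_0''}$ (and indeed $\h_0''=\n$). The genuine difference lies on $\n^\perp$. The paper splits it further as $\h_0'\oplus\h_0^0$ (the subspace where $V=V_0$, and the generic part of the pair $V_0,V$), sets $Z=0$ on $\h_0'$, and on $\h_0^0$ invokes results from \cite{pemenoscu} together with the Davis symmetry ${\bf S}=sgn(-\tfrac12(V_0+V))$ to obtain $Z|_{\h_0^0}=\log({\bf S}V_0)$. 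Your single Borel half-logarithm $Z_1=\psi(W|_{\n^\perp})$ handles both pieces at once, is self-contained (no external citation needed), and makes explicit why continuous spectrum of $W$ at $-1$ is harmless---a point the paper absorbs into the cited reference.
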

\begin{proof}
Pick $(P,Q)\in\d_{A_0}$, and let $V=sgn(P+Q-1)$ be its Davis' symmetry. Let $V_0$ be the symmetry corresponding to $(P_0,Q_0)$. Denote by 
$$
\h_{+,+}=\{\xi\in\h: V_0\xi=\xi \hbox{ and } V\xi=\xi\}  \ ,   \  \h_{-,-}=\{\xi\in\h: V_0\xi=-\xi \hbox{ and } V\xi=-\xi\} \ , 
$$
$$
\ \h_{+,-}=\{\xi\in\h: V_0\xi=\xi \hbox{ and } V\xi=-\xi\} \  \hbox{ and } \ \h_{-,+}=\{\xi\in\h: V_0\xi=-\xi \hbox{ and } V\xi=\xi\} . 
$$
Recall  the symmetry $J_0$ obtained as the sign of $A_0$, which anti-commutes with $V$ and $V_0$.
Note that, if $V_0\xi=\xi$, then $J_0\xi\ne 0$ satisfies $V_0J_0\xi=-J_0V_0\xi=-J_0\xi$, and similarly for $\xi$ such that $V_0\xi=-\xi$. The analogous property holds for $V$. Therefore, $J_0$ maps $\h_{+,+}$ into $\h_{-,-}$ and viceversa, and it is one to one between these subspaces. The same happens for $\h_{+,-}$ and $\h_{-,+}$.
In \cite{pemenoscu}, it was proven that there exists a geodesic of the Grassmann manifold of $\h$ (=space of symmetries of $\h$) joining $V$ and $V_0$ if and only if 
$$
\dim(\h_{+,-})=\dim(\h_{-,+}).
$$
It follows that there exists $Z$, $Z^*=-Z$, $\|Z\|\le\pi/2$, which is co-diagonal with respecto to $V_0$, such that $e^{Z}V_0e^{-Z}=V$. Such $Z$ may not be unique (if the dimensions above are non zero). To finish the proof, we must show that we can find one of these $Z$ in $\a_0$. In \cite{pemenoscu} it was shown that all such $Z$ are reduced by the decomposition
$$
\h_0=\h_0'\oplus \h_0''\oplus\h_0^0,
$$
where $\h_0'=\h_{+,+}\oplus\h_{-,-}$, $\h_0'=\h_{+,-}\oplus\h_{-,+}$ and $\h_0^0$ is the orthogonal complement to the sum of these two.   Both $J_0$ and $A_0$ are reduced by this decomposition. Clearly, also $V$ and $V_0$ are also reduced.

$Z$ is trivial in $\h_0'$ (\cite{pemenoscu}). In $\h_0''$, we choose $Z|_{\h_0''}=i\pi/2J_0|_{h_0''}$. The exponential of this anti-Hermitian operator yields a unitary operator intertwinig $V_0|_{\h_0''}$ and $V|_{\h_0''}$. In the third subspace, which,  in fact, is the generic part of the pair $V_0,V$, the exponent $Z|_{\h_0^0}$ is obtained uniquely as the logarithm (see \cite{pemenoscu})
$$
Z|_{\h_0^0}=\log({\bf S}V_0|_{\h_0^0}),
$$
where ${\bf S}$ is the Davis symmetry of the reductions $V_0|_{\h_0^0}$ and $V|_{\h_0^0}$  to their generic part: 
$$
{\bf S}=sgn(-\frac12\{V_0+V\}|_{\h_0^0}).
$$
$\{V+V_0\}|_{\h_0^0}$ anti-commutes with  $A_0|_{\h_0^0}$, then so does its unitary part ${\bf S}$ in the polar decomposition. It follows that ${\bf S}V_0$ commutes with $A_0|_{\h_0^0}$. Therefore, our choice of $Z$ belongs to $\a_0$ and the proof is complete .
\end{proof}

\section{A Hopf-Rinow theorem for $\d_{A_0}$}
In \cite{coco}, Dur\'an, Mata-Lorenzo and Recht introduced a Finsler metric for homogeneous spaces $\u_\a/\u_\b$ which are obtained as the quotient of the unitary group of a C$^*$-algebra by the unitary group of a C$^*$-subalgebra $\b\subset\a$. Denote by
$$
\pi:\u_\a\to \u_\a/\u_\b
$$
the quotient map. A tangent vector $v$ at $[1]$ (the class of $1$ in $\u_\a/\u_\b$) identifies with an element in the quotient of (real) Banach spaces $\a_{ah} /\b_{ah}$ ($\a_{ah}$ and $\b_{ah}$ denote the spaces of anti-Hermitian elements of $\a$ and $\b$, respectively). Thus $v=d(\pi)_1(z)$, for some $z\in\a_{ah}$; we shall say that $z$ is a {\it lifting} of $v$. The Finsler norm $|v|_{[1]}$ is defined as the infimum
$$
|v|=|v|_{[1]}=\inf\{\|z\|: z\in\a_{ah} \hbox{ is a lifting of } v\}.
$$
Or equivalently, if $z_0$ is an arbitrary lifting of $v$, 
$$
|v|=\inf\{\|z_0+y\|: y\in\b_{ah}\}.
$$
The metric is carried over the entire tangent bundle by means of the left action of $\u_\a$ on $\u_\a/\u_\b$. Thus,  the metric so defined is invariant under this action.  A lifting $z_0$ which achieves this infimum is called a {\it minimal lifting}. Minimal liftings may not exist (see for instance \cite{varela} for a nice example), and may fail to be unique (see \cite{matrices} for a finite dimensional example), but if  $\b\subset\a$ are von Neumann algebras, they do exist \cite{coco2}. We shall apply this theory to $\d_{A_0}$, where the algebras are indeed von Neumann algebras (being defined as commutant of selfadjoint  operators). The main result in \cite{coco} states that if $z_0\in\a_{ah}$ is a minimal lifting of $v$, then
$$
\delta(t)=[e^{tz}]
$$
is a curve of minimal length for time $|t|\le \frac{\pi}{2|v|}$.

In our context   $L(\ii_{(P_0,Q_0)})$ is the space of anti-Hermitian elements of the von Neumann algebra of operators which commute with $P_0$ and $Q_0$, inside the von Neumann algebra of operators which commute with $A_0=P_0-Q_0$. The fact that $\hh$ is a supplement for $L(\ii_{(P_0,Q_0)})$ in $(\a_0)_{ah}$, implies that any tangent vector $v$ in $T(\d_{A_0})_{(P_0,Q_0)}$ has a lifting $Z\in\hh$. We claim that  this lifting is minimal.
To prove this we need the following result:
\begin{prop}\label{prop616} {\rm (Proposition 5.2 of \cite{coco}, see also Theorem 2.2 of \cite{matrices})} 
Let $\b\subset\a$ be C$^*$-algebras, and let $Z\in\a_{ah}$. Suppose that there exists a state $\psi$ in $\a$  with the following properties:
\begin{enumerate}
\item
$\psi(Z^2)=-\|Z\|^2$.
\item
For any $Y\in\b$, $\psi(YZ)=0$.
\end{enumerate}
Then $Z$ is a minimal lifting (i.e., $\|Z\|\le \|Z+D\|$, for all $D\in\b_{ah}$).
\end{prop}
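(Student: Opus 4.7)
The plan is to sandwich the positive element $(Z+D)^*(Z+D)$ between the value of $\psi$ on it and its operator norm, and then to evaluate each term appearing after expansion. Fix $D\in\b_{ah}$. Since $\psi$ is a state and $(Z+D)^*(Z+D)$ is a positive element, the C$^*$-identity gives
\[
\psi\big((Z+D)^*(Z+D)\big)\le \big\|(Z+D)^*(Z+D)\big\|=\|Z+D\|^2.
\]
Using the anti-Hermiticity $Z^*=-Z$ and $D^*=-D$, I would then expand
\[
(Z+D)^*(Z+D)=-Z^2-ZD-DZ-D^2.
\]

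The crux is to bound the $\psi$-value of the right hand side from below by $\|Z\|^2$. Hypothesis (1) gives $-\psi(Z^2)=\|Z\|^2$ directly. For the cross terms, hypothesis (2) with $Y=D\in\b_{ah}\subset\b$ kills $\psi(DZ)=0$. The term $\psi(ZD)$ is not directly controlled by (2), and here I would invoke the conjugation property of states together with the anti-Hermiticity of both factors:
\[
\overline{\psi(ZD)}=\psi\big((ZD)^*\big)=\psi(D^*Z^*)=\psi((-D)(-Z))=\psi(DZ)=0,
\]
so $\psi(ZD)=0$ as well. Finally, $-D^2=D^*D\ge 0$ gives $-\psi(D^2)=\psi(D^*D)\ge 0$ by positivity of $\psi$.

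Combining these evaluations,
\[
\psi\big((Z+D)^*(Z+D)\big)=\|Z\|^2+\psi(D^*D)\ge \|Z\|^2,
\]
and together with the first inequality this yields $\|Z+D\|\ge\|Z\|$ for every $D\in\b_{ah}$, i.e.\ the minimality of $Z$. No step presents a real obstacle; the only mildly subtle point is that hypothesis (2) is one-sided (it controls $\psi(YZ)$, not $\psi(ZY)$), but this asymmetry is absorbed by the conjugation symmetry of $\psi$ combined with $D,Z$ being anti-Hermitian. The whole argument is purely C$^*$-algebraic once $\psi$ is in hand; the actual work, in the application to $\d_{A_0}$, will be to exhibit such a witnessing state for each $Z\in\hh$ obtained from the reductive structure.
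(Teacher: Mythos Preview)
Your argument is correct. Note, however, that the paper does not give its own proof of this proposition: it is stated as a citation from \cite{coco} (and \cite{matrices}) and used as a black box in the proof of Lemma~\ref{lema617}. Your proof is precisely the standard one behind the cited result: bound $\psi((Z+D)^*(Z+D))$ above by $\|Z+D\|^2$ using positivity of the state, expand, and use hypotheses (1) and (2) together with $\psi(a^*)=\overline{\psi(a)}$ to handle the cross terms. There is nothing to compare against in this paper, and nothing to fix in what you wrote.
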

\begin{lem}\label{lema617}
Let $Z^*=-Z$, with matrix
$$
Z=\left( \begin{array}{cr} X & Y \\ Y & -X \end{array} \right)
$$
 such that $X$ and $Y$ commute. Then 
$$
\|Z\|\le \|Z+D\|,
$$
for any $D^*=-D$ of the form $D=\left(\begin{array}{cc} D' & 0 \\ 0 & D' \end{array} \right)$.
\end{lem}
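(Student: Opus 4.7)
The plan is to apply Proposition \ref{prop616} with $\b$ the subalgebra of matrices $\left(\begin{array}{cc} D' & 0 \\ 0 & D' \end{array}\right)$ and $\a$ the ambient algebra of $2\times 2$ matrices over $\b(\l)$: one must exhibit a state $\psi$ on $\a$ satisfying $\psi(Z^2)=-\|Z\|^2$ and $\psi(WZ)=0$ for every $W\in\b$.

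First I would compute $Z^2$. The hypothesis $Z^*=-Z$ forces $X^*=-X$ and $Y^*=-Y$, so both $X^2$ and $Y^2$ are negative operators. The commutativity $XY=YX$ kills the off-diagonal entries, yielding
$$
Z^2=\left(\begin{array}{cc} X^2+Y^2 & 0 \\ 0 & X^2+Y^2 \end{array}\right),
$$
and therefore $\|Z\|^2=\|Z^2\|=\|X^2+Y^2\|$, the norm of a (diagonal) negative operator. Since $-(X^2+Y^2)\ge 0$, there is a state $\phi$ on the C$^*$-algebra generated by $X$ and $Y$ (say inside $\b(\l)$) with $\phi(-(X^2+Y^2))=\|X^2+Y^2\|$. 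Now define a state on $\a$ by averaging the diagonal entries,
$$
\psi\!\left(\left(\begin{array}{cc} M_{11} & M_{12} \\ M_{21} & M_{22} \end{array}\right)\right)=\tfrac12\phi(M_{11})+\tfrac12\phi(M_{22}).
$$
Then $\psi(Z^2)=\phi(X^2+Y^2)=-\|Z\|^2$, giving hypothesis (1). For any $W=\left(\begin{array}{cc} D' & 0 \\ 0 & D' \end{array}\right)\in\b$,
$$
WZ=\left(\begin{array}{cc} D'X & D'Y \\ D'Y & -D'X \end{array}\right),
$$
so $\psi(WZ)=\tfrac12\phi(D'X)+\tfrac12\phi(-D'X)=0$, giving hypothesis (2). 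Proposition \ref{prop616} then yields $\|Z\|\le\|Z+D\|$ for every anti-Hermitian $D\in\b$.

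There is really no serious obstacle: the computation is driven entirely by the fact that the partial-trace-in-the-matrix-direction structure of $\psi$ forces the diagonal blocks $D'X$ and $-D'X$ of $WZ$ to cancel, while the commutativity of $X$ and $Y$ collapses $Z^2$ into a scalar multiple of the identity (in the matrix variable) so that a single norming state $\phi$ on the entries suffices. The only standard input is the existence of a state achieving the norm on a positive operator.
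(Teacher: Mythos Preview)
Your proof is correct and essentially identical to the paper's own argument: both compute $Z^2=\left(\begin{smallmatrix} X^2+Y^2 & 0 \\ 0 & X^2+Y^2 \end{smallmatrix}\right)$, pick a norming state on the entry algebra, compose with the partial trace $\tfrac12(M_{11}+M_{22})$, and invoke Proposition~\ref{prop616}. The only cosmetic point is that your $\phi$ should be taken (or extended) as a state on all of $\b(\l)$ rather than just on the C$^*$-algebra generated by $X$ and $Y$, since you need to evaluate it on $D'X$ for arbitrary $D'$; the paper simply takes $\psi_0$ on $\b(\l)$ from the start.
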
 
\begin{proof}
We use Proposition \ref{prop616} above.
Note that  
$$
Z^2=\left( \begin{array}{cc} X^2+Y^2 & XY-YX \\  YX-XY &  X^2+Y^2 \end{array} \right)=\left( \begin{array}{cc} X^2+Y^2 & 0 \\ 0 & X^2+Y^2 \end{array} \right) .
$$
Note, also,  that $X^2+Y^2\le 0$, and that $\|Z^2\|=\|X^2+Y^2\|$. There exists a state $\psi_0$ in $\b(\l)$ such that $\psi_0(X^2+Y^2)=-\|X^2+Y^2\|$. Let $\tau$ the positive unital linear map
$$
\tau(\left( \begin{array}{cc} T_{11} & T_{12} \\ T_{21} & T_{22} \end{array} \right) )=\frac12\{T_{11} + T_{22}\}.
$$
Then $\psi=\psi_0\circ \tau$ is a state in $\b(\h)$ such that
\begin{enumerate}
\item
$$
\psi(Z^2)=\psi_0(X^2+Y^2)=-\|X^2+Y^2\|.
$$
\item
For $D^*=-D$ of the form, $D=\left( \begin{array}{cc} D' & 0 \\ 0 & D' \end{array} \right)$,
$$
\tau(ZB)=\tau ( \left( \begin{array}{cc} XD' & YD' \\ YD' & -XD' \end{array} \right)=0.
$$
Then $\psi(ZB)=0$.
\end{enumerate}
\end{proof}

If $v\in T(\d_{\a_0})_{(P_0,Q_0)}$, we shall denote by $Z_v\in\hh$ the unique lifting of $v$ which belongs to $\hh$. Note that the mapping $v\mapsto Z_v$ is a linear isomorphism (it is the inverse of $d(\pi_{(P_0,Q_0)})_1$ restricted to $\hh$).

The following result follows:

\begin{teo}\label{geodesica minimal}
Let $(P_0,Q_0)\in\d_{A_0}$ and $v\in T(\d_{A_0})_{(P_0,Q_0)}$ such that $Z_v=\left( \begin{array}{cc} Y\tau & Y \\ Y & -Y\tau \end{array} \right)$.
Then 
$$
\delta(t)=e^{tZ_v}\cdot (P_0,Q_0)=(e^{tZ_v}P_0e^{-tZ_v},e^{tZ_v}Q_0e^{-tZ_v})
$$
is a minimal geodesic for the Finsler metric defined above, up to time 
$$
|t|\le \frac{\pi}{2|v|_{(P_0,Q_0)}}=\frac{\pi}{2\|Z_v\|}=\frac{\pi}{2\|Y^2+(Y\tau)^2\|^{1/2}}.
$$
\end{teo}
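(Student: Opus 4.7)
The plan is to reduce the statement to a direct application of the Duran--Mata-Lorenzo--Recht minimality theorem from \cite{coco}, which says that if $Z\in(\a_0)_{ah}$ is a \emph{minimal lifting} of $v\in T(\d_{A_0})_{(P_0,Q_0)}$, then the one-parameter curve $[e^{tZ}]$ is minimizing for $|t|\le\pi/(2\|Z\|)$. Thus the entire task reduces to showing that the horizontal lifting $Z_v\in\hh$ is minimal among all liftings of $v$, i.e.\ that
\[
\|Z_v\|\le \|Z_v+D\|\qquad\text{for every }D\in L(\ii_{(P_0,Q_0)}).
\]

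First I would fix the Halmos frame at $(P_0,Q_0)$ and recall the explicit forms we have already derived. By Remark \ref{isotropia} the isotropy Lie algebra $L(\ii_{(P_0,Q_0)})$ consists exactly of matrices $\bigl(\begin{smallmatrix} D' & 0 \\ 0 & D'\end{smallmatrix}\bigr)$ with $(D')^*=-D'$ and $D'\Gamma=\Gamma D'$. On the other hand, by Definition \ref{el horizontal} and Remark \ref{CONMUTAN}, the horizontal lift has the form
\[
Z_v=\begin{pmatrix} -Y\tau & Y \\ Y & Y\tau\end{pmatrix},\qquad Y^*=-Y,
\]
with $Y$ and $Y\tau$ \emph{commuting} bounded operators on $\l$. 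In particular, $Z_v$ fits the hypotheses of Lemma \ref{lema617} with $X=-Y\tau$ commuting with $Y$.

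Next, applying Lemma \ref{lema617} directly gives $\|Z_v\|\le \|Z_v+D\|$ for every $D$ of the form required, i.e.\ for every $D\in L(\ii_{(P_0,Q_0)})$. Hence $Z_v$ is a minimal lifting of $v$. Since $Z_v$ lies in $(\a_0)_{ah}$ and $\a_0$ is a von Neumann algebra, the hypotheses of Theorem 5.1 of \cite{coco} are met, so the curve
\[
\delta(t)=[e^{tZ_v}]=(e^{tZ_v}P_0e^{-tZ_v},\,e^{tZ_v}Q_0e^{-tZ_v})
\]
has minimal length among all piecewise-smooth curves in $\d_{A_0}$ joining its endpoints, for every $t$ with $|t|\le \pi/(2\|Z_v\|)$.

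Finally, to obtain the explicit bound stated in the theorem, I would compute $\|Z_v\|$: since $Y$ and $Y\tau$ commute and both are anti-Hermitian,
\[
Z_v^{2}=\begin{pmatrix} Y^{2}+(Y\tau)^{2} & 0 \\ 0 & Y^{2}+(Y\tau)^{2}\end{pmatrix},
\]
so $\|Z_v\|=\|Z_v^{2}\|^{1/2}=\|Y^{2}+(Y\tau)^{2}\|^{1/2}$, and also $|v|_{(P_0,Q_0)}=\|Z_v\|$ because $Z_v$ is a minimal lifting. Substituting gives the stated bound $|t|\le\pi/\bigl(2\|Y^{2}+(Y\tau)^{2}\|^{1/2}\bigr)$. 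The only non-routine step is the verification that $Z_v$ is minimal, but this has already been packaged into Lemma \ref{lema617}; the rest is bookkeeping against the Halmos matrix description established earlier in the paper.
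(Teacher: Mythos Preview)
Your proof is correct and follows essentially the same route as the paper's own argument: invoke Remark \ref{CONMUTAN} to ensure the entries of $Z_v$ commute, apply Lemma \ref{lema617} to conclude $Z_v$ is a minimal lifting, and then cite the Dur\'an--Mata-Lorenzo--Recht theorem for the minimality of $\delta$. The paper's proof is just a terser version of what you wrote, omitting the explicit computation of $\|Z_v\|$ and the recollection of the isotropy algebra's form.
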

\begin{proof}
Recall from Remark \ref{CONMUTAN}, that for any $Z_v\in\mathbb{H}_{(P_0,Q_0)}$, all entries in the matrix commute. Thus,  Lemma \ref{lema617}  holds and $Z_v$ has minimal norm among all perturbations with elements in the isotropy algebra.
\end{proof}

The fact that the minimal liftings belong to the linear space $\hh$ implies,  additionally, the following result, which can be regarded as a Hopf-Rinow Theorem for the space $\d_{A_0}$ (in a context which is far  from being Riemannian). Note that the pairs in $\d_{A_0}$ which can be reached from any given pair $(P_0,Q_0)$ by a {\it unique} minimal geodesic, is an open dense subset of $\d_{A_0}$. Theorem \ref{radio geodesico} and Theorem \ref{geodesica minimal} imply
\begin{coro}
Let  $(P,Q), (P_0,Q_0)\in\d_{A_0}$, and let $V$ and $V_0$ be the corresponding Davis' symmetries: $V=sgn(P+Q-1)$, $V_0=sgn(P_0+Q_0-1)$.   If  $\|V-V_0\|<2$, then there exists a unique element $Z=Z_{(P_0,Q_0)}(P,Q)$, which is a $C^\infty$ map in terms of the arguments $(P_0,Q_0), (P,Q)$, such that
$$
(P,Q)=e^{Z}\cdot (P_0,Q_0).
$$
The geodesic $\delta(t)=e^{tZ}\cdot (P_0,Q_0)$ has minimal length among all piecewise smooth curves in $\d_{A_0}$ joining $(P_0,Q_0)$ with $(P,Q)$.
\end{coro}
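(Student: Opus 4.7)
The plan is to assemble the corollary directly from Theorem \ref{radio geodesico} and Theorem \ref{geodesica minimal}, using the explicit logarithmic formula for $Z$ extracted in the proof of the former. The existence and uniqueness claim, and the minimality claim, each fall out of one of the two theorems; the main nontrivial work is verifying that the resulting $Z$ depends smoothly on both pairs.

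First, for existence and uniqueness, Theorem \ref{radio geodesico} shows that $\exp_{(P_0,Q_0)}$ is a bijection from $\{Z\in\hh:\|Z\|<\pi/2\}$ onto its image, and the argument there identifies that image precisely with those $(P,Q)\in\d_{A_0}$ whose Davis symmetry $V$ satisfies $\|V-V_0\|<2$. The explicit formula is
$$
Z = \tfrac12\log(VV_0),
$$
where $\log$ is the unique branch of the logarithm defined on unitaries $U$ with $\|U-1\|<2$; this applies since $\|VV_0-1\|=\|V-V_0\|<2$ ($V_0$ being a symmetry). So for any admissible $(P,Q)$ in the hypothesis of the corollary, this formula produces the unique $Z\in\hh$ with $\|Z\|<\pi/2$ such that $(P,Q)=e^{Z}\cdot(P_0,Q_0)$.

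Second, for smoothness, the formula above writes $Z_{(P_0,Q_0)}(P,Q)$ as a composition of multiplication and the analytic logarithm (smooth on the open set of unitaries $U$ with $\|U-1\|<2$), applied to the Davis symmetries of the two pairs. What must be verified is that the Davis symmetry itself depends smoothly on its pair in a neighborhood of the relevant point. This is not a direct consequence of the polar decomposition, because $P+Q-1$ may fail to be invertible; however, as explained in the reductive structure section, the symmetry transforms equivariantly along the orbit, $V=UV_0U^{*}$ whenever $(P,Q)=U\cdot(P_0,Q_0)$. Combining this with a smooth local cross section for $\pi_{(P_0,Q_0)}$ (available by the submersion property of that map, as guaranteed by Definition \ref{def67} and the subsequent proposition) gives smoothness of $V$ in $(P,Q)$. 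The symmetric argument handles the dependence on $(P_0,Q_0)$, so $Z$ is $C^\infty$ in both arguments on the prescribed open set.

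Third, minimality is immediate from Theorem \ref{geodesica minimal}: the geodesic $\delta(t)=e^{tZ}\cdot(P_0,Q_0)$ has minimal length in the quotient Finsler metric for $|t|\le \pi/(2\|Z\|)$, and since $\|Z\|<\pi/2$ we have $\pi/(2\|Z\|)>1$, so $\delta|_{[0,1]}$ is a minimal curve from $(P_0,Q_0)$ to $(P,Q)$. The main obstacle is the smoothness claim; it is not a matter of differentiating the sign function through functional calculus but must be routed through the equivariance of the Davis correspondence and the existence of smooth cross sections for the unitary action, both of which have already been set up earlier in the paper.
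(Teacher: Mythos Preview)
Your proof is correct and follows essentially the same route as the paper: existence and uniqueness of $Z$ (with $\|Z\|<\pi/2$) come from Theorem~\ref{radio geodesico}, and minimality of $\delta|_{[0,1]}$ then follows from Theorem~\ref{geodesica minimal} since $\pi/(2\|Z\|)>1$. Your treatment is in fact more complete than the paper's two-line proof, which does not spell out the $C^\infty$ dependence; your argument via the explicit formula $Z=\tfrac12\log(VV_0)$ together with the equivariance $V=UV_0U^*$ and smooth local cross sections is exactly the right way to fill this gap.
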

\begin{proof}
From the proof of of Theorem \ref{radio geodesico}, it is clear that $\|Z\|<\pi/2$. Then, by Theorem \ref{geodesica minimal}, $\delta$ is minimal up to time $t=1$, and $\delta(1)=(P,Q)$.
\end{proof}
Using now Theorem \ref{expo onto}, one can show that (dropping the uniqueness condition), any pair of elements in $\d_{A_0}$ can be joined by a minimal geodesic.
\begin{coro}
Let $(P_0,Q_0), (P,Q)\in\d_{A_0}$. Then there exists a minimal geodesic of $\d_{A_0}$, of length less or equal than $\pi/2$, which joins them.
\end{coro}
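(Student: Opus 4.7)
The strategy is simply to combine the two preceding theorems: Theorem \ref{expo onto} provides an exponential preimage $Z$ with $\|Z\|\le\pi/2$ connecting the two pairs, and Theorem \ref{geodesica minimal} turns such a preimage into a minimal geodesic.

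Explicitly, I would first apply Theorem \ref{expo onto} to obtain $Z\in\mathbb{H}_{(P_0,Q_0)}$ with $\|Z\|\le\pi/2$ and $e^{Z}\cdot(P_0,Q_0)=(P,Q)$. The curve $\delta(t)=e^{tZ}\cdot(P_0,Q_0)$, $t\in[0,1]$, is then the geodesic of the reductive connection starting at $(P_0,Q_0)$ with initial velocity $v=\dot\delta(0)$, and $\delta(1)=(P,Q)$. Since $Z$ is already horizontal, it coincides with the canonical horizontal lift $Z_v$ of $v$, so $|v|_{(P_0,Q_0)}=\|Z\|$. Applying Theorem \ref{geodesica minimal}, $\delta$ is minimal for the Finsler metric up to time $\pi/(2\|Z\|)$; but $\|Z\|\le\pi/2$ forces $\pi/(2\|Z\|)\ge 1$, so $\delta|_{[0,1]}$ is minimal, and its length is $\|Z\|\le\pi/2$, which is exactly the asserted bound.

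There is no substantive obstacle beyond assembling these two ingredients. The only small point to check is that the minimality interval of Theorem \ref{geodesica minimal} is closed, so that the borderline case $\|Z\|=\pi/2$, in which $t=1$ saturates $\pi/(2\|Z\|)=1$, is still covered; this is immediate from the way that theorem is phrased. In contrast to the previous corollary, uniqueness of $Z$ is lost in general (when $\|V-V_0\|=2$, Theorem \ref{expo onto} allows multiple valid exponents, coming for instance from the factor $Z|_{\h_0''}=i\pi/2\, J_0|_{\h_0''}$), which is why the present statement asserts only existence.
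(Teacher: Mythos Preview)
Your proposal is correct. Both you and the paper begin by invoking Theorem \ref{expo onto} to produce $Z\in\mathbb{H}_{(P_0,Q_0)}$ with $\|Z\|\le\pi/2$ and consider the geodesic $\delta(t)=e^{tZ}\cdot(P_0,Q_0)$; the only difference lies in how the borderline case $\|Z\|=\pi/2$ is handled. You appeal directly to the closed inequality $|t|\le\pi/(2\|Z_v\|)$ in Theorem \ref{geodesica minimal}, which indeed covers $t=1$. The paper instead treats that case by a separate geometric argument: if a competitor $\gamma$ of length $\pi/2-r$ existed, one would concatenate $\gamma$ with a short reversed tail $\delta|_{[t_0,1]}$ (of length $<r/2$) to reach $\delta(t_0)$ in length strictly less than $t_0\pi/2$, contradicting the minimality of $\delta|_{[0,t_0]}$ (which has $\|Z|_{[0,t_0]}\|=t_0\pi/2<\pi/2$, so the previous corollary applies). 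Your route is more economical; the paper's argument has the mild advantage of deducing the endpoint case from the interior case, so it does not depend on whether the quoted result from \cite{coco} is stated with $\le$ or only with strict inequality.
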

\begin{proof}
The existence of a geodesic $\delta$, parametrized in the interval $[0,1]$,  joining $(P_0,Q_0)$ and $(P,Q)$ is a direct consequence of Theorem \ref{expo onto}. Also, it is clear there that the norm of the exponent $Z$ (=the length of the geodesic) is less or equal than $\pi/2$. Let us prove, by a standard geometric argument, that it must be minimal. Clearly, by the above result, we must consider the case when the length $\ell(\delta)$ of $\delta$ is $\pi/2$. Suppose there is a curve $\gamma\in\d_{A_0}$ joining $(P_0,Q_0)$ of length $\pi/2-r$, for some $r>0$. Pick $t_0$ such that $(1-t_0)\pi/2=\ell(\delta|_{[t_0,1]})<r/2$. Then the length of the curve obtained by adjoining to $\gamma$ the curve $\delta|_{[t_0,1]}$ reversed,  gives a curve joining $(P_0,Q_0)$ and $\delta(t_0)$, of length strictly less than $t_0\pi/2$. This is a contradiction, since by the above Corollary, the curve $\delta|_{[0,t_0 ]}$ has minimal length $t_0\pi/2$.
\end{proof}

\begin{rem}
Note that if $Z=\left( \begin{array}{cc} -Y\tau & Y \\ Y & Y\tau \end{array} \right)$ is a horizontal element, then also $YC^{-1}$ is bounded. Recall that $X,Y,Z$ commute with $C,S,\tau$, the equality $C(Z-X)=2SY$ implies that 
$$
C^2(Z-X)^2 4S^2Y^2=4Y^2-4C^2Y^2 \ , \ \hbox{ i.e., } \ \ C^2 A=Y^2,
$$
where $A=\frac14((X-Z)^2+Y^2)$ is bounded. It follows that $Y^2C^{-2}$ is bounded, thus its square root $|Y|c^{-1}$ is bounded, and then also $YC^{-1}$ is bounded.
Moreover, if $Z=\left(\begin{array}{cc} -Y\tau & Y \\ Y & Y\tau \end{array} \right)$, then 
$$
\|Z\|=\|Y^2\tau^2+Y^2\|^{1/2}=\|YC^{-1}\|.
$$
Therefore, if $\|Z\|\le \pi/2$,  the distance between $(P_0,Q_0)$ and $e^{Z}\cdot(P_0,Q_0)$ equals $\|YC^{-1}\|$.
\end{rem}

Supported by PIP 0525 (CONICET).

\bigskip

Esteban Andruchow \\
Instituto de Ciencias,  Universidad Nacional de Gral. Sarmiento,
\\
J.M. Gutierrez 1150,  (1613) Los Polvorines, Argentina
\\ 
and Instituto Argentino de Matem\'atica, `Alberto P. Calder\'on', CONICET, 
\\
Saavedra 15 3er. piso,
(1083) Buenos Aires, Argentina.
\\
e-mail: eandruch@ungs.edu.ar

\bigskip

Gustavo Corach\\
Instituto Argentino de Matem\'atica, `Alberto P. Calder\'on', CONICET,
\\
Saavedra 15 3er. piso, (1083) Buenos Aires, Argentina,
\\
and Depto. de Matem\'atica, Facultad de Ingenier\'\i a, Universidad de Buenos Aires, Argentina.
\\
e-mail: gcorach@fi.uba.ar

\bigskip

L\'azaro Recht \\
Departamento de Matem\'atica P y A, 
Universidad Sim\'on Bol\'\i var \\
Apartado 89000, Caracas 1080A, Venezuela  \\
e-mail: recht@usb.ve


\begin{thebibliography}{XX}

\bibitem{amreinsinha}  Amrein, W.O.;   Sinha, K.B.,  On pairs of projections in a Hilbert space, Linear Algebra Appl. 208/209  (1994), 425--435.


\bibitem{pemenoscu} Andruchow, E.; Operators which are the difference of two projections,  J. Math. Anal. Appl. 420 (2014), no. 2, 1634-1653.

\bibitem{grassH2} Andruchow, E.;  Chiumiento, E.;  Larotonda, G.; Geometric significance of Toeplitz kernels, J. Funct. Anal. (to appear), 
preprint arXiv:1608.05737 

\bibitem{incertidumbre} Andruchow, E.; Corach, G., Uncertainty principle and geometry of the infinite Grassmann manifold, Studia Math. (to appear), preprint     arXiv:1701.03733.

\bibitem{pqvsp-q} Andruchow, E.; Corach, G., Schmidt decomposable products of projections, Integral Equations Operator Theory 79 (2018), 79-100.

\bibitem{timisoara} Andruchow, E.; Corach, G.,   Curves of projections an operator inequalities, Proceedings of the 26th International Conference on Operator Theory, Timisoara June 17th-July 2nd, 2016 (in press).

 \bibitem{paisanos} Andruchow, E.; Corach, G.; Stojanoff, D. Projective spaces of a $C^\ast$-algebra, Integral Equations Operator Theory 37 (2000), 143--168. 
 
 \bibitem{matrices} Andruchow, E.; Mata-Lorenzo, L. E.; Recht, L.; Mendoza, A.; Varela, A., Infinitely many minimal curves joining arbitrarily close points in a homogeneous space of the unitary group of a $C^*$-algebra, Rev. Un. Mat. Argentina 46 (2005), no. 2, 113--120 (2006).

\bibitem{ass} Avron, J.; Seiler, R.; Simon, B.,  The index of a pair of projections. J. Funct. Anal. 120 (1994),  220--237.

\bibitem{beltita} Beltita, D., Smooth homogeneous structures in operator theory. Chapman \& Hall/CRC Monographs and Surveys in Pure and Applied Mathematics, 137. Chapman \& Hall/CRC, Boca Raton, FL, 2006.

\bibitem{varela}  Bottazzi, T.; Varela, A.,  Minimal length curves in unitary orbits of a Hermitian compact operator, Differential Geom. Appl. 45 (2016), 1--22


\bibitem{cpr} Corach, G.; Porta, H.; Recht, L., The geometry of spaces of projections in $C^*$-algebras, Adv. Math. 101 (1993), no. 1, 59--77.

\bibitem{davis} Davis, C., Separation of two linear subspaces, Acta Sci. Math. Szeged 19 (1958) 172--187.

\bibitem{deutsch} Deutsch, F., Best approximation in inner product spaces, Springer-Verlag, New York, 2001.



\bibitem{dixmier} Dixmier, J., Position relative de deux vari\'et\'es lin\'eaires ferm\'ees dans un espace de Hilbert,  Revue Sci. 86 (1948), 387-399.

\bibitem{douglas} Douglas, R. G.,  Banach algebra techniques in operator theory. Second edition. Graduate Texts in Mathematics, 179. Springer-Verlag, New York, 1998.

\bibitem{coco} Dur\'an, C. E.; Mata-Lorenzo, L. E.; Recht, L., Metric geometry in homogeneous spaces of the unitary group of a $C^*$-algebra. I. Minimal curves, Adv. Math. 184 (2004), 342--366

\bibitem{coco2}  Dur\'an, C. E.; Mata-Lorenzo, L. E.; Recht, L., Metric geometry in homogeneous spaces of the unitary group of a $C^\ast$-algebra. II. Geodesics joining fixed endpoints. Integral Equations Operator Theory 53 (2005), 33--50.

\bibitem{folland} Folland, G. B.; Sitaram, A., The uncertainty principle: a mathematical survey, J. Fourier Anal. Appl. 3 (1997), 207--238. 


\bibitem{friedrichs} Friedrichs, K., On certain inequalities and characteristic value problems for analytic functions and for functions of two variables, Trans. Amer. Math. Soc. 41 (1937), 321--364. 

\bibitem{halmos} Halmos, P. R., Two subspaces, Trans. Amer. Math. Soc. 144 (1969), 381--389.


\bibitem{havin} Havin, V.; J\"oricke, B., The uncertainty principle in harmonic analysis. Ergebnisse der Mathematik und ihrer Grenzgebiete (3)  28. Springer-Verlag, Berlin, 1994.


\bibitem{kato} Kato, T., Perturbation theory for linear operators. Reprint of the 1980 edition. Classics in Mathematics. Springer-Verlag, Berlin, 1995.

\bibitem{kn} Kobayashi, S.; Nomizu, K.,  Foundations of differential geometry. Vol. I. Reprint of the 1963 original. Wiley Classics Library. A Wiley-Interscience Publication. John Wiley \& Sons, Inc., New York, 1996. 

\bibitem{lenard} Lenard, A., The numerical range of a pair of projections, J. Functional Analysis 10 (1972), 410--423. 


\bibitem{m-l r} Mata-Lorenzo, L. E.; Recht, L., Infinite-dimensional homogeneous reductive spaces. Acta Cient. Venezolana 43 (1992), 76--90. 

\bibitem{niko86}  Nikol'skii, N. K., Treatise on the shift operator. Spectral function theory.  Grundlehren der Mathematischen
Wissenschaften, 273. Springer-Verlag, Berlin, 1986.


\bibitem{pr} Porta, H.; Recht, L., Minimality of geodesics in Grassmann manifolds, Proc. Amer. Math. Soc. 100 (1987), 464--466.

\bibitem{rae} Raeburn,  I., The relationship between a commutative Banach algebra and its maximal ideal space, J. Funct. Anal. 25 (1977), 366--390.

\bibitem{chinos} Shi, W.; Ji, G.; Du, H., Pairs of orthogonal projections with a fixed difference, Linear Algebra Appl. 489 (2016), 288--297.

\bibitem{slepian} Slepian, D.; Pollak, H. O., Prolate spheroidal wave functions, Fourier analysis and uncertainty. I, Bell System Tech. J. 40 (1961), 43--63.

\end{thebibliography}
\end{document}